\theoremstyle{plain}
\newtheorem{thm}{Theorem}[section]
\newtheorem{cor}[thm]{Corollary}
\newtheorem{lem}[thm]{Lemma}
\newtheorem{prop}[thm]{Proposition}
\newtheorem*{introtheorem}{Theorem} 
\theoremstyle{definition}
\newtheorem{defn}{Definition}[section]
\newtheorem{rem}{Remark}[section]
\newtheorem{example}{Example}[section]
\newcommand{\calB}{{\mathcal B}}
\newcommand{\Ff}{{\mathcal F}}
\newcommand{\Hi}{{\mathcal H}}
\newcommand{\Ll}{{\mathcal L}}
\newcommand{\Or}{{\mathcal O}}
\newcommand{\p}{{\mathfrak p}}
\newcommand{\q}{{\mathfrak q}}
\newcommand{\IC}{{\mathbb C}}
\newcommand{\IF}{{\mathbb F}}
\newcommand{\IP}{{\mathbb P}}
\newcommand{\IZ}{{\mathbb Z}}
\newcommand{\higherlim}[2]{\displaystyle\setbox1=\hbox{\rm lim}
	\setbox2=\hbox to \wd1{\leftarrowfill} \ht2=0pt \dp2=-1pt
	\setbox3=\hbox{$\scriptstyle{#1}$}
	\def\test{#1}\ifx\test\empty
	\mathop{\mathop{\vtop{\baselineskip=5pt\box1\box2}}}\nolimits^{#2}
	\else
	\ifdim\wd1<\wd3
	\mathop{\hphantom{^{#2}}\vtop{\baselineskip=5pt\box1\box2}^{#2}}_{#1}
	\else
	\mathop{\mathop{\vtop{\baselineskip=5pt\box1\box2}}_{#1}}%
	\nolimits^{#2}
	\fi\fi}
\newcommand{\pcom}{{}_{p}^{\wedge}}
\DeclareMathOperator{\Ab}{Ab}
\DeclareMathOperator{\Aut}{Aut}
\DeclareMathOperator{\diag}{diag}
\DeclareMathOperator{\Fred}{Fred}
\DeclareMathOperator{\Hom}{Hom}
\DeclareMathOperator{\im}{Im}
\DeclareMathOperator{\Inj}{Inj}
\DeclareMathOperator{\Inn}{Inn}
\DeclareMathOperator{\Map}{Map}
\DeclareMathOperator{\Mod}{Mod}
\DeclareMathOperator{\Out}{Out}
\DeclareMathOperator{\rad}{rad}
\DeclareMathOperator{\reg}{reg}
\DeclareMathOperator{\Rep}{Rep}
\DeclareMathOperator{\res}{res}
\DeclareMathOperator{\tr}{tr}
\begin{document}
\title[A completion theorem for fusion systems]{A completion theorem for fusion systems}
\author[N. B\'arcenas]{No\'e B\'arcenas}
\address{Centro de Ciencias Matem\'aticas. UNAM\\
Ap.Postal 61-3 Xangari.\\
Morelia, Michoac\'an\\
MEXICO 58089}
\email{barcenas@matmor.unam.mx} 
\author[J. Cantarero]{Jos\'e Cantarero}
\address{Consejo Nacional de Ciencia y Tecnolog\'ia \\
Centro de Investigaci\'on en Matem\'aticas, A.C. Unidad M\'erida \\
Parque Cient\'ifico y Tecnol\'ogico de Yucat\'an \\
Carretera Sierra Papacal-Chuburn\'a Puerto Km 5.5 \\
M\'erida, YUC 97302, Mexico}
\email{cantarero@cimat.mx}

\subjclass[2010]{55R35 (primary), 19A22, 19L50, 20D20 (secondary)}
\keywords{twisted $K$-theory, $p$--local, fusion system}

\begin{abstract}
We show that the twisted $K$-theory of the classifying space of a $p$-local finite
group is isomorphic to the completion of the Grothendieck group of twisted representations 
of the fusion system with respect to the augmentation ideal of the representation ring of 
the fusion system. We use this result to compute the $K$-theory of the Ruiz-Viruel 
exotic $7$-local finite groups.
\end{abstract}

\maketitle

\section*{Introduction}

Homotopy invariants of the classifying space of a finite group can often be described in terms of 
algebraic invariants of the group. An important example is Atiyah's completion theorem \cite{A}, 
which shows that the $K$-theory of the classifying space is isomorphic to the completion of the 
representation ring with respect to the augmentation ideal. This result was 
extended to compact Lie groups in \cite{AH} and \cite{AS} and to twisted $K$-theory in \cite{L}.

A $p$-local finite group is an algebraic structure introduced by Broto, Levi and Oliver \cite{BLO}
with the purpose of giving a combinatorial model for the $p$-completion of the classifying 
space of a finite group, but also for other spaces that behave similarly. These additional
$p$-local finite groups are called exotic and there are many examples in the literature by
now, see for instance \cite{BM}, \cite{BLO}, \cite{COS}, \cite{DRV}, \cite{LO}, \cite{O2}, \cite{RV}. 

Each $p$-local finite group $(S,\Ff,\Ll)$ has a classifying space $| \Ll | \pcom$ and some of its homotopy 
invariants have been described in terms of the ``algebraic part'' of the $p$-local 
finite group, which is a saturated fusion system $\Ff$ over a finite $p$-group $S$. 
For example, the fundamental group of $| \Ll | \pcom$ is the quotient of $S$ by the hyperfocal 
subgroup of $\Ff$, and $H^2(|\Ll | \pcom;\IF_p)$ is the set of equivalence classes of central 
extensions of $\Ff$ by $\IZ/p$ (see \cite{BCGLO}). 

In this article we give a description of the twisted $K$-theory of $| \Ll | \pcom$ in terms of algebraic 
invariants of $(S,\Ff)$. Namely, we consider the twistings of $K$-theory classified by the integral third 
cohomology group and in Section \ref{section:twistedrepresentations}, we show that any element in 
$H^3(|\Ll|\pcom;\IZ)$ is represented by a $2$-cocycle of $S$ with values in a cyclic $p$-group. Our main theorem 
is the following

\begin{introtheorem}
Let $(S,\Ff,\Ll)$ be a $p$-local finite group and $\alpha$ a $2$-cocycle of $S$ with values in a
cyclic $p$-group representing an element $\beta_n(\alpha)$ in $H^3(|\Ll|\pcom;\IZ)$. The completion of ${}^{\alpha} R(\Ff)$ 
with respect to the augmentation ideal of $R(\Ff)$ is isomorphic to ${}^{\beta_n(\alpha)} K(| \Ll | \pcom)$.
\end{introtheorem}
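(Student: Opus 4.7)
The plan is to combine three ingredients: (i) the subgroup decomposition of Broto-Levi-Oliver, expressing $|\Ll|\pcom$ as the $p$-completion of a homotopy colimit of $BP$'s indexed over the orbit category $\Or(\Ff^c)$; (ii) Lahtinen's twisted Atiyah-Segal completion theorem \cite{L}, identifying ${}^{\alpha|_P}K(BP) \cong ({}^{\alpha|_P}R(P))^{\wedge}_{I(P)}$ for each finite $p$-subgroup $P\le S$; and (iii) a description of ${}^{\alpha}R(\Ff)$ (respectively $R(\Ff)$) as the inverse limit over $\Or(\Ff^c)$ of the twisted (respectively untwisted) representation rings of $\Ff$-centric subgroups, under which the augmentation ideal $I(\Ff) \subseteq R(\Ff)$ corresponds to the system of augmentation ideals $I(P)$.

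Applying twisted $K$-theory to the subgroup decomposition and running the associated Bousfield-Kan spectral sequence---together with the vanishing of higher inverse limits $\varprojlim^{>0}$ of representation-ring-valued functors over $\Or(\Ff^c)$, in the spirit of the Jackowski-McClure vanishing results extended to fusion systems---yields
\[
{}^{\alpha}K(|\Ll|\pcom) \;\cong\; \varprojlim_{P \in \Or(\Ff^c)} {}^{\alpha|_P}K(BP).
\]
Lahtinen's theorem applied termwise rewrites the right hand side as $\varprojlim_{P} \bigl({}^{\alpha|_P}R(P)\bigr)^{\wedge}_{I(P)}$.

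The principal remaining obstacle is the purely algebraic comparison
\[
\varprojlim_{P \in \Or(\Ff^c)} \bigl({}^{\alpha|_P}R(P)\bigr)^{\wedge}_{I(P)} \;\cong\; \bigl({}^{\alpha}R(\Ff)\bigr)^{\wedge}_{I(\Ff)},
\]
which amounts to exchanging the inverse limit with $I(\Ff)$-adic completion. This requires that $R(\Ff)$ be Noetherian---which holds because it is a module-finite subalgebra of the finitely generated $\IZ$-algebra $R(S)$---and that for each $P$ the $I(P)$-adic and $I(\Ff)$-adic topologies on ${}^{\alpha|_P}R(P)$ coincide, which follows from the fact that restriction along $R(\Ff) \to R(P)$ is module-finite, so Artin-Rees ensures the two filtrations are commensurable. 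Granted these Noetherian inputs, $I$-adic completion is exact on the finitely generated modules arising, commutes with the finite diagram defining ${}^{\alpha}R(\Ff)$, and the theorem follows by assembling the above identifications.
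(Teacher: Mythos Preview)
Your overall architecture---stable elements formula, Lahtinen's theorem applied termwise, comparison of $I(P)$-adic and $I(\Ff)$-adic topologies, and exactness of completion over the Noetherian ring $R(\Ff)$---matches the paper's. However, there is a genuine gap at the very first step: the assignments $P \mapsto {}^{\alpha|_P}R(P)$ and $P \mapsto {}^{\alpha|_P}K(BP)$ are \emph{not} functors on $\Or(\Ff^c)$ in any evident way. For a morphism $f \in \Hom_{\Ff}(P,S)$, the pulled-back cocycles $f^*\alpha$ and $j_P^*\alpha$ agree only up to coboundary, so the targets ${}^{f^*\alpha}R(P)$ and ${}^{j_P^*\alpha}R(P)$ are merely noncanonically isomorphic; one cannot form the inverse limit you write down without first rigidifying these identifications coherently. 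The paper addresses exactly this point (see the remark preceding Definition~\ref{DefTwistedRepresentation}) and resolves it by \emph{untwisting}: it passes to the central extension $(S_{\alpha},\Ff_{\alpha},\Ll_{\alpha})$ and replaces twisted representations of $P\le S$ by honest $A$-representations of $\Ff_{\alpha}$-centric $Q\le S_{\alpha}$, and twisted $K$-theory by $K(BQ;\pi_Q)$. These are genuine functors on $\Or(\Ff_{\alpha}^c)$, and the stable elements formulas (Proposition~\ref{StableElementsAReps} and Theorem~\ref{StableElementsKATheory}) are proved in that setting.

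A secondary imprecision: module-finiteness of $R(\Ff)\to R(P)$ together with Artin--Rees does \emph{not} by itself force the $I(P)$-adic and $I(\Ff)R(P)$-adic topologies to coincide (there are easy counterexamples with module-finite extensions). What is needed is that these two ideals have the same radical in $R(P)$; the paper establishes this by an analysis of the prime spectrum of $R_A(\Ff)$ following Atiyah (Section~\ref{section:ideals}, culminating in Corollary~\ref{TopologiesCoincide}). Finally, the appeal to vanishing of higher limits is unnecessary: the stable elements isomorphism comes from a stable splitting via the characteristic idempotent, and the exchange of completion with the limit is handled at the cochain-complex level using only additivity and exactness of completion on finitely generated modules.
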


Here ${}^{\alpha} R(\Ff)$ is the Grothendieck group of the monoid of $\alpha$-twisted complex representations
of $S$ that are $\Ff$-invariant in a certain sense. For the trivial $2$-cocycle, this corresponds to 
the representation ring $R(\Ff)$ of the fusion system, already studied in \cite{CCM} and \cite{CGT}. 
A complex representation $\rho$ of $S$ is $\Ff$-invariant if $\rho_{|P}$ is equivalent to $\rho_{|f(P)} \circ f$
for any $P \leq S$ and any $ f \colon P \to S$ in $\Ff$.

In Section \ref{section:ideals}, we adapt Atiyah's proof from \cite{A} to determine the prime ideals of $R(\Ff)$ 
and compare the adic topologies on $R(S)$ induced from the augmentation ideals of $R(S)$ and $R(\Ff)$. Both $R(\Ff)$ 
and the $K$-theory of $| \Ll | \pcom$ are computed by stable elements, in the sense that they are limits over the orbit category of $\Ff$-centric subgroups of the representation ring functor and the $K$-theory of the classifying space functor. The completion theorem in this case, which is proved in Section \ref{section:completion}, follows from the naturality of Atiyah's completion map, the results in Section \ref{section:ideals} and the exactness of the completion functor for finitely generated modules. 

To study the general case, we see in Section \ref{section:twistedrepresentations} that an element $\alpha$ in $H^3(| \Ll | \pcom;\IZ)$ 
induces a central extension $(S_{\alpha},\Ff_{\alpha},\Ll_{\alpha})$ of the $p$-local finite group $(S,\Ff,\Ll)$ by $\IZ/p^n$, where 
$p^n$ is the order of $S$. There is a process of turning an $\alpha$-twisted representation of $S$ into a representation of $S_{\alpha}$
and we say that the twisted representation is $\Ff$-invariant when the corresponding twisted representation is $\Ff_{\alpha}$-invariant.
In this way we obtain that ${}^{\alpha} R(\Ff)$ is isomorphic to the Grothendieck group $R(\Ff_{\alpha};\IZ/p^n)$ of $\Ff_{\alpha}$-invariant 
representations of $S_{\alpha}$ where $\IZ/p^n$ acts by scalar multiplication. Moreover, we show that this group can be computed by stable 
elements in $\Ff_{\alpha}$.

In order to compare this group with the twisted $K$-theory of $| \Ll | \pcom$, we note that the
classifying space $|\Ll_{\alpha}| \pcom $ fits into a principal $B\IZ/p^n$-bundle $|\Ll_{\alpha}| \pcom \to | \Ll | \pcom$.
We provide in Section \ref{section:twistedcompletion} an alternative description of twisted $K$-theory 
for spaces $X$ which are the base space of such a principal bundle with respect to the twisting that classifies
the bundle. In particular, the twisted $K$-theory of $| \Ll  | \pcom $ is given as a set of equivariant homotopy 
classes of $B\IZ/p^n$-equivariant maps from $|\Ll_{\alpha}| \pcom$ to a certain space of Fredholm operators. 
The advantage of this characterization is that we can give a stable elements formula in $\Ff_{\alpha}$. The 
completion maps of \cite{L}, seen from this point of view, are natural and this allows us to use the same strategy 
as in the untwisted case to prove the general completion theorem.

In Section \ref{section:final} we use this theorem for several computations. We compute the $K$-theory of the 
$p$-completion of $B\Sigma_p$ for any odd prime $p$ and the twisted $K$-theory of the $2$-completion of $BA_4$. 
We also determine the $K$-theory of the classifying spaces for the three exotic $7$-local finite groups from \cite{RV}. 

\subsection{Acknowledgments}
The  first author  thanks  the  support  of  PAPIIT-UNAM grants  IA 100315 ``Topolog\'ia  Algebraica  y  la  Aplicaci\'on 
de  Ensamble  de  Baum-Connes'', IA 100117 ``Topolog\'ia Equivariante  y Teor\'ia  de  \'indice'', as well  as  CONACYT-SEP  
Foundational  Research  grant ``Geometr\'ia No  conmutativa, Aplicaci\'on de  Baum-Connes  y  Topolog\'ia  Algebraica''. 
The second author was partially supported by SEP-CONACYT grant 242186. We are grateful to the referee for providing very helpful suggestions. This is a post-peer-review, pre-copyedit version of an article published in the Israel Journal of Mathematics. The final authenticated version is available online at: https://doi.org/10.1007/s11856-020-1981-4

\section{Preliminaries}
\label{section:preliminaries}

We first review the models of twisted $K$-theory and some of the results in the theory of $p$-local finite groups 
that will be used in the paper. This material is included for completeness and to fix some of the notation. \newline
 
Twisted $K$-theory with respect to an integral twist $\alpha \in H^3(X;\IZ)$ was first introduced in \cite{DOKA} and \cite{R}. 
We will adopt the homotopy theoretical viewpoint, as stated originally in \cite{ASTW}, with the point-set topology considerations
in \cite{BEJU}.

Let $\Hi$ be an infinite-dimensional separable complex Hilbert space. A projective bundle $P \to X$ is a locally trivial
bundle with fiber $\IP(\Hi)$ and structure group $PU(\Hi)$. We give $PU(\Hi) = U(\Hi)/S^1$ the quotient topology,
where $U(\Hi)$ is given the compactly generated topology generated by the compact-open topology. Since $PU(\Hi)$ is 
a $K(\IZ,2)$, isomorphism classes of projective bundles over $X$ are classified by elements in $H^3(X;\IZ)$. However, a
twisting will be a projective bundle, which corresponds to a $3$-cocycle.

Let $\Fred'(\Hi)$ be the set of pairs $(A,B)$ of bounded operators on $\Hi$ such that $AB-1$ and $BA-1$ are compact operators. 
Endow $\Fred'(\Hi)$ with the topology induced by the embedding
\[ \Fred'(\Hi) \to B(\Hi) \times B(\Hi) \times K(\Hi) \times K(\Hi) \]
\[ (A,B) \mapsto (A,B,AB - 1,BA - 1) \]
where $B(\Hi)$ is the space of bounded operators with the compact-open topology and $K(\Hi)$ is 
the space of compact operators with the norm topology. The group $PU(\Hi)$ acts on $\Fred'(\Hi)$ 
by conjugation. Given a projective bundle $P \to X$, we denote by $\Fred(P)$ the associated bundle 
$ P \times_{PU(\Hi)} \Fred'(\Hi)$ over $X$.

\begin{defn}
The $P$-twisted $K$-theory of $X$ is the group of homotopy classes of sections of $\Fred(P) \to X$.
We denote it by ${}^P K(X)$.
\end{defn} 

Twisted $K$-theory is functorial with respect to maps of spaces and maps of projective bundles.
Namely, a map $ f \colon Y \to X$ induces a group homomorphism $ f^* \colon {}^P K(X) \to {}^{f^*P} K(Y)$.
And a map of projective bundles $ g \colon P \to Q$ over $X$ induces a group homomorphism $ g_* \colon {}^P K(X) \to {}^Q K(X)$.
Hence if two projective bundles are isomorphic, the corresponding twisted $K$-theory groups are isomorphic, but
not canonically. There is also a multiplication 
\[ {}^P K(X) \times {}^Q K(X) \to {}^{P \otimes Q} K(X) \]
In particular, ${}^P K(X)$ is a module over $K(X)$. \newline


Now we recall the notion of a $p$--local finite group introduced by Broto, Levi and
Oliver in \cite{BLO} as a way of modelling combinatorially the $p$--completion of 
the classifying space of a finite group. Their definition was based on the concept 
of saturated fusion system, first described by Puig \cite{P}.

Given subgroups $P$ and $Q$ of $S$ we denote by $\Hom_S(P,Q)$ the set of group homomorphisms $P \to Q$ that
are conjugations by an element of $S$ and by $\Inj(P,Q)$ the set of monomorphisms from $P$ to $Q$.

\begin{defn}
A fusion system $\Ff$ over a finite $p$--group $S$ is a subcategory of the category of groups 
whose objects are the subgroups of $S$ and such that the set of morphisms $\Hom_{\Ff}(P,Q)$ between two 
subgroups $P$ and $Q$ satisfies the following conditions:
\begin{enumerate}
    \item[(a)] $\Hom_S (P,Q) \subseteq \Hom_{\Ff}(P,Q) \subseteq \Inj(P,Q)$ for all $P,Q \leq S$.
    \item[(b)] Every morphism in $\Ff$ factors as an isomorphism in $\Ff$ followed by an inclusion.
\end{enumerate}
\end{defn}
 
\begin{defn}
Let $\Ff$ be a fusion system over a $p$--group $S$.
\begin{itemize}
    \item We say that two subgroups $P,Q \leq S$ are $\Ff$--conjugate if they are isomorphic in $\Ff$.
    \item A subgroup $P\leq S$ is fully centralized in $\Ff$ if $|C_S (P)| \geq |C_S(P')|$ for all 
    $P' \leq S$ which are $\Ff$--conjugate to $P$.
    \item A subgroup $P\leq S$ is fully normalized in $\Ff$ if $|N_S (P)| \geq |N_S(P')|$ for all 
    $P' \leq S$ which are $\Ff$--conjugate to $P$.
    \item $\Ff$ is a saturated fusion system if the following conditions hold:
    \begin{enumerate}
        \item [(I)] Each fully normalized subgroup $P \leq S$ is fully centralized and
        the group $\Aut_S(P)$ is a $p$--Sylow subgroup of $\Aut_{\Ff}(P)$.
        \item [(II)] If $P \leq S$ and $\varphi \in \Hom _{\Ff}(P,S)$ are such that
        $\varphi P$ is fully centralized, and if we set
\[ N_\varphi = \{ g\in N_S(P) \mid \varphi c_g \varphi^{-1} \in \Aut_S(\varphi P) \}, \]
        then there is $\overline{\varphi} \in \Hom_{\Ff}(N_\varphi ,S)$ such that $\overline{\varphi}_{|P} =\varphi$.
    \end{enumerate}
\end{itemize}
\end{defn}

The motivating example for this definition is the fusion system of a finite group $G$. For a fixed Sylow 
$p$--subgroup $S$ of $G$, let $\Ff_S(G)$ be the fusion system over $S$ defined by setting $\Hom_{\Ff_S(G)}(P,Q)=\Hom_G(P,Q)$.
This is a saturated fusion system. Examples of other exotic examples can be found for instance in \cite{BM}, \cite{BLO}, 
\cite{COS}, \cite{DRV}, \cite{LO}, \cite{O2}, \cite{RV}.

Alperin's fusion theorem for saturated fusion systems (Theorem A.10 in \cite{BLO}) shows that morphisms can be expressed
as compositions of inclusions and automorphisms of a particular class of subgroups, called $\Ff$-centric radical subgroups. 
This result is often used to reduce limits over the orbit category of $\Ff$ to the full subcategory whose objects are 
$\Ff$-centric subgroups or $\Ff$-centric radical subgroups.

\begin{defn}
Let $\Ff$ be a fusion system over a $p$--group $S$.
\begin{itemize}
\item A subgroup $P \leq S$ is $\Ff$--centric if $P$ and all its $\Ff$--conjugates contain their $S$--centralizers.
\item A subgroup $P \leq S$ is $\Ff$--radical if $\Aut_{\Ff}(P)/\Inn(P)$ is $p$--reduced, that is, it has no
nontrivial normal $p$--subgroup.
\end{itemize}
\end{defn}

The notion of a centric linking system is the extra structure needed in the definition of a $p$--local
finite group to obtain a classifying space which behaves like $BG \pcom$ for a finite group $G$. For the
purposes of this paper, it suffices to say that a centric linking system $\Ll$ associated to a saturated
fusion system is category $\Ll$ whose objects are the $\Ff$-centric subgroups of $S$, together with a
functor $\pi \colon \Ll \to \Ff^c$ that satisfies certain properties. See Definition 1.7 in \cite{BLO}.

\begin{defn}
A $p$--local finite group is a triple $(S,\Ff,\Ll)$, where $\Ff$ 
is a saturated fusion system over a finite $p$--group $S$ and 
$\Ll$ is a centric linking system associated to $\Ff$. The 
classifying space of the $p$--local finite group $(S,\Ff,\Ll)$ 
is the space $|\Ll| \pcom$, the $p$-completion in the sense of 
Bousfield-Kan \cite{BK} of the geometric realization of the 
category $\Ll$.
\end{defn}

A theorem of Chermak \cite{C} (see also Oliver \cite{O}) states that every saturated fusion system admits a centric linking system  
and that it is unique up to isomorphism of centric linking systems. In particular, the classifying space of a $p$--local
finite group is determined up to homotopy equivalence by the saturated fusion system. In this paper we need to consider cohomological
decompositions of $|\Ll| \pcom$ over the orbit category. The orbit category of $\Ff$ is the category whose objects are the subgroups 
of $S$ and whose morphisms are
\[ \Hom_{\Or(\Ff)}(P,Q)= \Rep_{\Ff}(P,Q) := \Hom_{\Ff}(P,Q) /\Inn(Q) . \]
Also, $\Or(\Ff^c)$ is the full subcategory of $\Or(\Ff)$ whose objects are the $\Ff$--centric subgroups of
$S$. Theorem 4.2 of \cite{CCM} shows that cohomological invariants of $p$--local finite groups can be computed by stable elements, 
in the following sense.

\begin{thm} 
Let $h^*$ be a generalized cohomology theory.  Given a $p$--local finite group $(S,\Ff,\Ll)$, there is an isomorphism
\[ h^*(|\Ll |\pcom) \cong \higherlim{\Or(\Ff^c)}{} h^*(BP) . \]
\end{thm}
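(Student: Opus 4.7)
The plan is to derive the isomorphism from the subgroup decomposition of $|\Ll|\pcom$ established in \cite{BLO}. Namely, there is a homotopy equivalence
\[ |\Ll|\pcom \simeq \Bigl( \operatorname*{hocolim}_{\Or(\Ff^c)} \widetilde{B} \Bigr)\pcom, \]
where $\widetilde{B} \colon \Or(\Ff^c) \to \mathrm{Top}$ is a functor with $\widetilde{B}(P) \simeq BP$, and the structure maps are induced by the morphisms in $\Ff$ (via the orbit category).

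I would then apply $h^*$ to this decomposition and run the Bousfield--Kan cohomology spectral sequence of a homotopy colimit,
\[ E_2^{s,t} = \higherlim{\Or(\Ff^c)}{s} h^t(BP) \ \Longrightarrow\ h^{s+t}\Bigl(\operatorname*{hocolim}_{\Or(\Ff^c)} \widetilde{B}\Bigr). \]
A preliminary point to verify here is that $h^*$ of the $p$-completion agrees with $h^*$ of the uncompleted homotopy colimit in the sense needed; this relies on the fact that each $BP$ is already $p$-complete (being the classifying space of a finite $p$-group) together with the appropriate $p$-adic convergence of the spectral sequence for the class of theories considered.

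The main obstacle, and the heart of the argument, is to prove that $E_2^{s,t} = 0$ for $s \geq 1$. The standard strategy, which I would follow, proceeds in two steps. First, using Alperin's fusion theorem, one restricts the functor $h^*(B-)$ along the inclusion $\Or(\Ff^{cr}) \hookrightarrow \Or(\Ff^c)$ and argues that this restriction preserves higher limits, reducing the problem to vanishing over the smaller category $\Or(\Ff^{cr})$ of $\Ff$-centric radical subgroups. Second, one filters $h^*(B-)$ by its values on individual $\Aut_{\Ff}(P)$-orbits, obtaining an extension by atomic functors of the form $F_P$ concentrated on a single orbit; for each such atomic piece, the higher limits can be computed as the group cohomology $H^{>0}(\Out_{\Ff}(P); h^*(BP))$, which vanishes because $\Out_{\Ff}(P)$ is of order prime to $p$ while $h^*(BP)$ is built from $p$-local information (this is the BLO-type vanishing, established for $\IF_p$-cohomology as Proposition 3.2 of \cite{BLO}, and extended to general $h^*$ by filtering via the Atiyah--Hirzebruch spectral sequence for each $BP$ and applying the $\IF_p$-vanishing to each graded piece).

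Once this vanishing is in hand, the spectral sequence collapses onto the $s = 0$ line, and the edge homomorphism delivers the desired isomorphism
\[ h^*(|\Ll|\pcom) \cong \higherlim{\Or(\Ff^c)}{} h^*(BP). \]
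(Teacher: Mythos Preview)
The paper does not prove this theorem itself but quotes it as Theorem~4.2 of \cite{CCM}; the method there (visible also in the proof sketch of Theorem~\ref{StableElementsKATheory} in this paper) is entirely different from yours. It uses Ragnarsson's characteristic idempotent: the stable map $\omega \in \{\Sigma^{\infty}BS,\Sigma^{\infty}BS\}$ coming from a characteristic biset splits $\Sigma^{\infty}|\Ll|\pcom$ off $\Sigma^{\infty}BS$ as a stable summand. Applying $h^*$ to this stable retraction identifies $h^*(|\Ll|\pcom)$ directly with the image of the idempotent on $h^*(BS)$, which is the subgroup of $\Ff$-stable elements. No spectral sequence or higher-limit vanishing is needed, and the passage through $p$-completion is handled by the stable splitting rather than by comparing $h^*$ of completed and uncompleted homotopy colimits.

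Your spectral sequence route has genuine gaps. First, the identification of $h^*(|\Ll|\pcom)$ with $h^*$ of the uncompleted homotopy colimit is not automatic for an arbitrary generalized theory $h^*$; the remark that ``each $BP$ is already $p$-complete'' does not by itself force the map from the colimit to its $p$-completion to be an $h^*$-equivalence. Second, and more seriously, your vanishing argument is incorrect on two points: higher limits of an atomic functor over $\Or(\Ff^c)$ are computed by the Jackowski--McClure--Oliver functors $\Lambda^*(\Out_{\Ff}(P);-)$, not by ordinary group cohomology $H^*(\Out_{\Ff}(P);-)$; and $\Out_{\Ff}(P)$ is \emph{not} of order prime to $p$ for $\Ff$-centric radical $P$ (radical only means it has no nontrivial normal $p$-subgroup). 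The actual mechanism that forces the higher limits to vanish for $h^*(B-)$ is the existence of transfers satisfying a double-coset formula, i.e.\ the Mackey structure, which is exactly what the characteristic-idempotent argument exploits and what your outline omits.
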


Finally, we would like to say a few words about the theory of central extensions of $p$--local finite groups in \cite{BCGLO}. The
center of a fusion system $\Ff$ over $S$ is the subgroup
\[ Z_{\Ff}(S) = \higherlim{\Ff^c}{} Z \]
where $Z$ is the functor $ P \mapsto Z(P)$. If $A$ is a subgroup of $Z_{\Ff}(S)$ and $(S,\Ff,\Ll)$ is a $p$--local finite group,
there is a $p$--local finite group $(S/A,\Ff/A,(\Ll/A)^c)$ with
\[ \Hom_{\Ff/A}(P/A,Q/A) = \im[\Hom_{\Ff}(P,Q) \to \Hom(P/A,Q/A)] \]
and such that there is a principal fibration
\[ BA \to |\Ll| \pcom \to |(\Ll/A)^c| \pcom \]

A central extension of a $p$--local finite group $(S,\Ff,\Ll)$ by a finite abelian $p$--group $A$ is a $p$-local finite group 
$(\widetilde{S},\widetilde{\Ff},\widetilde{\Ll})$ such that $A \leq Z_{\tilde{\Ff}}(S)$ and $(S,\Ff,\Ll)$ is isomorphic to $(\widetilde{S}/A,\widetilde{\Ff}/A,(\widetilde{\Ll}/A)^c) $. The following theorem is part of Theorem 6.13 in \cite{BCGLO}

\begin{thm}
There is a bijective correspondence between 
\begin{itemize}
\item Equivalence classes of central extensions of $(S,\Ff,\Ll)$ by $A$.
\item Equivalence classes of principal fibrations $BA \to X \to |\Ll| \pcom$.
\item Elements of $H^2(|\Ll| \pcom ;A)$.
\end{itemize}
\end{thm}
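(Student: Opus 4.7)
The plan is to obtain the three-way correspondence as a composition of two bijections: central extensions correspond to principal $BA$-fibrations over $|\Ll|\pcom$, and principal $BA$-fibrations correspond to elements of $H^2(|\Ll|\pcom;A)$. Throughout the argument $A$ is a finite abelian $p$-group, so $BA$ is a $K(A,1)$ and, in particular, carries the structure of an abelian topological group.

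The bijection between principal $BA$-fibrations and $H^2(|\Ll|\pcom;A)$ is standard: the classifying space for principal $BA$-bundles is $B(BA)\simeq K(A,2)$, so equivalence classes of such fibrations over $|\Ll|\pcom$ are in natural bijection with $[|\Ll|\pcom,K(A,2)]=H^2(|\Ll|\pcom;A)$. No additional input from the theory of $p$-local finite groups is required for this step.

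For the bijection between central extensions and principal fibrations, one direction is immediate from the construction recalled just before the statement: a central extension $(\widetilde{S},\widetilde{\Ff},\widetilde{\Ll})$ of $(S,\Ff,\Ll)$ by $A$ produces the principal fibration
\[ BA \to |\widetilde{\Ll}|\pcom \to |(\widetilde{\Ll}/A)^c|\pcom \simeq |\Ll|\pcom, \]
and equivalent central extensions induce equivalent fibrations by functoriality of the quotient by $A$.

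The harder direction, going from a principal fibration $BA\to X\to |\Ll|\pcom$ back to a central extension, is where I expect the main obstacle to lie. The idea is to pull the fibration back along the canonical maps $BP\to|\Ll|\pcom$ for $P$ an $\Ff$-centric subgroup. Pulling back along $BS$ produces a principal $BA$-bundle over $BS$, equivalently an element of $H^2(BS;A)$, which classifies a central group extension $1\to A\to\widetilde{S}\to S\to 1$ realized by $B\widetilde{S}$; the same recipe applied to each $\Ff$-centric $P$ yields compatible central extensions $\widetilde{P}$. I would then define $\widetilde{\Ff}$ over $\widetilde{S}$ by declaring a homomorphism $\widetilde{\varphi}\colon\widetilde{P}\to\widetilde{Q}$ to be a morphism exactly when it restricts to the identity on $A$ and descends to some $\varphi\in\Hom_{\Ff}(P,Q)$, and assemble $\widetilde{\Ll}$ from the pullbacks of the linking category data. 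The delicate step is verifying saturation axioms (I) and (II) for $\widetilde{\Ff}$ and identifying $|\widetilde{\Ll}|\pcom$ with $X\pcom$; here I would invoke Alperin's fusion theorem to reduce checking axioms to $\Ff$-centric radical subgroups, use the cocycle description of the pullback on each $BN_S(P)$ to lift extensions of morphisms, and finally appeal to the homotopy decomposition of $|\Ll|\pcom$ as a homotopy colimit over $\Or(\Ff^c)$ (together with the $p$-completed Bousfield-Kan spectral sequence for the fibration) to recover $X\pcom\simeq|\widetilde{\Ll}|\pcom$. Uniqueness of the centric linking system (Chermak-Oliver) then ensures that equivalent fibrations give equivalent central extensions, completing the bijection.
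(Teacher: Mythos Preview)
The paper does not actually prove this theorem: it is stated in the preliminaries section with the attribution ``The following theorem is part of Theorem 6.13 in \cite{BCGLO}'' and no argument is given. There is therefore nothing in the paper to compare your proposal against.

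That said, your outline is broadly in the spirit of how the result is established in \cite{BCGLO}. The bijection between principal $BA$-fibrations and $H^2(|\Ll|\pcom;A)$ is indeed standard classifying-space theory, and the passage from a central extension to a fibration is exactly the construction the paper recalls. Your sketch of the reverse direction --- pulling back along $BP\to|\Ll|\pcom$, reconstructing $\widetilde{\Ff}$ and $\widetilde{\Ll}$ from the resulting compatible family of group extensions, and then identifying the total space --- is the right shape, but you should be aware that in \cite{BCGLO} the construction of the extended linking system and the verification that it is a centric linking system for a saturated fusion system is done carefully at the level of categories (using cocycle data on $\Ll^q$ rather than just on the groups $P$), and predates the Chermak--Oliver uniqueness theorem, so invoking that result is anachronistic relative to the original proof even if it would now be a legitimate shortcut. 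If you want a self-contained argument you will need to supply the categorical construction of $\widetilde{\Ll}$ and the saturation check directly; these are the genuinely nontrivial parts and your proposal only gestures at them.
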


\section{The representation ring of a fusion system}
\label{section:ideals}

In this section we determine the prime ideals of the 
representation ring of a saturated fusion system $\Ff$ 
over a finite $p$-group $S$. The argument is inspired
by Section 6 of \cite{A}. We use this information to 
compare different topologies on representation rings 
of subgroups of $S$. We start by recalling the definition of $\Ff$-invariant
representations from \cite{CCM}.

\begin{defn}
\label{defrepresentationfusionsystem}
Let $\Ff$ be a saturated fusion system over a finite $p$-group $S$. A representation $\rho \colon S \to U_n$
is $\Ff$-invariant if the representations $\rho_{|P}$ and $\rho_{|f(P)} \circ f$ are isomorphic for any $P \leq S$ 
and any $ f \in \Hom_{\Ff}(P,S)$. We will also refer to $\rho$ as a representation of $\Ff$.
\end{defn}

A representation $\rho$ is $\Ff$-invariant if and only if its character $\chi_{\rho}$ is $\Ff$-invariant
in the sense that $ \chi_{\rho}(s) = \chi_{\rho}(f(s))$ for any morphism $f$ in $\Ff$ (See Remark 3.3 in \cite{CCM}). 
In particular, the regular representation of $S$ and the trivial representation are $\Ff$-invariant for 
any saturated fusion system $\Ff$ over $S$. Note that the set $\Rep(\Ff)$ of isomorphism classes of unitary representations
of $S$ which are $\Ff$-invariant is a monoid under the operation induced by direct sum of representations. It is closed under tensor product as well.

\begin{defn}
The representation ring $R(\Ff)$ of $\Ff$ is the Grothendieck group
of $\Rep(\Ff)$.
\end{defn}

By Proposition 5.7 in \cite{CCM}, it can be described by stable elements:
\[ R(\Ff) \cong \higherlim{\Or(\Ff^c)}{} R(Q) \]
Note that if $V$ is an $\Ff$-invariant representation which is isomorphic to a direct sum $ A \oplus B$ of representations
of $S$, where $A$ is $\Ff$-invariant, then so is $B$. Hence any $\Ff$-invariant representation is isomorphic
to a direct sum of $\Ff$-invariant representations which can not be decomposed as direct sums of nontrivial
$\Ff$-invariant subrepresentations. We will call these irreducible $\Ff$-invariant representations. Note that 
an irreducible $\Ff$-invariant representation may be reducible as a representation of $S$, and that the
decomposition of an $\Ff$-invariant representation as a direct sum of irreducible $\Ff$-invariant representations 
is not necessarily unique.

Let $X_{\Ff}$ be the set of $\Ff$-conjugacy classes of elements of $S$. Taking an $\Ff$-invariant 
representation to its character induces a ring homomorphism $ \chi_{\Ff} \colon R(\Ff) \otimes \IC \to \Map(X_{\Ff},\IC)$.

\begin{lem}
\label{CharacterIsomorphism}
The map $ \chi_{\Ff} \colon R(\Ff) \otimes \IC \to \Map(X_{\Ff},\IC) $
is an isomorphism.
\end{lem}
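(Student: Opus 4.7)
The plan is to factor $\chi_{\Ff}$ through the classical character isomorphism $\chi_S\colon R(S)\otimes\IC \xrightarrow{\sim} \Map(X_S,\IC)$, using that $\Map(X_{\Ff},\IC)$ is naturally the subspace of $\Map(X_S,\IC)$ consisting of functions that are constant on $\Ff$-conjugacy classes. Since a representation is $\Ff$-invariant iff its character is, the natural map $R(\Ff)\to R(S)$ induced by $\Rep(\Ff)\hookrightarrow\{\text{representations of }S\}$ is injective (virtual representations are distinguished by their characters), and the composition
\[ R(\Ff)\otimes\IC \to R(S)\otimes\IC \xrightarrow{\chi_S} \Map(X_S,\IC) \]
factors through $\Map(X_{\Ff},\IC)$; the factored map is precisely $\chi_{\Ff}$. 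Injectivity of $\chi_{\Ff}$ is then automatic.

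For surjectivity, I would prove the cleaner integral statement that the image of $R(\Ff)\to R(S)$ equals the subgroup $\{v\in R(S) : \chi_v \text{ is } \Ff\text{-invariant}\}$. The containment $\subseteq$ is immediate from the character criterion. For the reverse, let $R=\IC[S]$ denote the regular representation. Its character is $|S|$ at the identity and $0$ elsewhere, hence it is $\Ff$-invariant as a function on $S$, so $R$ itself is an $\Ff$-invariant representation. Given $v=\rho-\sigma$ with $\chi_v$ an $\Ff$-invariant class function, choose $N$ so large that $R^N$ contains every irreducible $S$-representation with multiplicity exceeding that in $\sigma$; then $\tau:=R^N-\sigma$ is a genuine representation of $S$. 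Now $\sigma+\tau=R^N$ is $\Ff$-invariant, and $\chi_{\rho+\tau}=\chi_v+N\chi_R$ is $\Ff$-invariant, so by the character criterion cited from \cite{CCM} the representation $\rho+\tau$ is $\Ff$-invariant. Hence $v=(\rho+\tau)-(\sigma+\tau)\in R(\Ff)$.

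Tensoring this identification with $\IC$ and combining with the known isomorphism $\chi_S$, the image of $R(\Ff)\otimes\IC$ in $\Map(X_S,\IC)$ is exactly $\Map(X_{\Ff},\IC)$, which gives the required surjectivity. The only genuine obstacle is the regular-representation trick used to absorb $\sigma$ into an $\Ff$-invariant representation while keeping $\rho+\tau$ a genuine representation; once this is in place, everything else is formal from the equivalence between $\Ff$-invariance of a representation and of its character.
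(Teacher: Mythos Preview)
Your proof is correct and follows the same overall architecture as the paper's: both establish injectivity via the commutative square with $\chi_S$, and both obtain surjectivity by identifying $R(\Ff)\otimes\IC$ with the subspace of $\Ff$-invariant elements in $R(S)\otimes\IC$ and then invoking flatness of $\IC$ over $\IZ$.

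The genuine difference lies in how the integral identification is obtained. The paper cites Proposition~5.7 of \cite{CCM} for the stable-elements formula $R(\Ff)\cong\lim_{\Or(\Ff^c)}R(P)$ and then commutes the limit with $-\otimes\IC$ via flatness. You instead prove the equivalent statement---that the image of $R(\Ff)\to R(S)$ is exactly $\{v\in R(S):\chi_v\text{ is }\Ff\text{-invariant}\}$---by hand, using the regular representation to absorb the negative part of a virtual representation. This is more elementary and self-contained than the characteristic-biset induction that underlies the cited result. One small point: your final sentence (``tensoring this identification with $\IC$ \ldots the image is exactly $\Map(X_{\Ff},\IC)$'') still needs the observation that $\Ff$-invariance of characters is cut out by $\IZ$-linear conditions (e.g.\ as the kernel of $R(S)\to\prod_f R(P_f)$, $v\mapsto\res_P v-f^*\res_{f(P)}v$), so that the kernel commutes with $-\otimes\IC$; this is exactly the flatness step the paper writes out, and is routine once noted.
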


\begin{proof}
Note that $\Rep(\Ff)$ is a submonoid of $\Rep(S)$ by definition,
hence the forgetful map $R(\Ff) \to R(S)$ is an injective ring
homomorphism. Since $\IC$ is flat over $\IZ$, the induced map 
$R(\Ff) \otimes \IC \to R(S) \otimes \IC$ is also injective. 

Let $X_S$ the set of $S$-conjugacy classes of elements of $S$ and consider 
the following commutative diagram
\[
\diagram
R(\Ff) \otimes \IC \rto^{\chi_{\Ff}} \dto^j & \Map(X_{\Ff},\IC) \dto^{r^*} \\
R(S) \otimes \IC \rto^{\chi_S} & \Map(X_S,\IC)
\enddiagram
\]
where the horizontal maps are induced by sending representations to their characters
and $r \colon X_S \to X_{\Ff}$ is the quotient map.

It is well known that $\chi_S$ is an isomorphism, hence $\chi_{\Ff}$ is injective. 
Given a map $ f \colon X_{\Ff} \to \IC$, we have $ r^*(f) = \chi_S(z)$ for some 
$z \in R(S) \otimes \IC$. But this implies
\[ z \in \higherlim{\Or(\Ff^c)}{} (R(P) \otimes \IC) \]
And since $\IC$ is flat over $\IZ$
\[ \higherlim{\Or(\Ff^c)}{} (R(P) \otimes \IC) = \left( \higherlim{\Or(\Ff^c)}{} R(P) \right) \otimes \IC = \im(j) \]
And then $\chi_{\Ff}(z) = f$, that is, $\chi_{\Ff}$ is an isomorphism.
\end{proof}

Since $R(S)$ is a free abelian group and $R(\Ff)$ is a subgroup of $R(S)$, it is
also free abelian.

\begin{cor}
The rank of the free abelian group $R(\Ff)$ equals the number of $\Ff$-conjugacy
classes of $S$.
\end{cor}

We denote by $I(G)$ the augmentation ideal of $R(G)$, 
that is, the kernel of the augmentation map $R(G) \to \IZ$.
Similarly, we denote by $I(\Ff)$ the kernel of the augmentation map $R(\Ff) \to \IZ$. 

Given $P \leq S$, we can regard $R(P)$ as a module over $R(\Ff)$ via the composition
$R(\Ff) \to R(S) \to R(P)$. In this section we prove that the $I(P)$-adic topology of 
$R(P)$ coincides with the $I(\Ff)$-adic topology of $R(P)$ seen as
an $R(\Ff)$-module via this map. By Theorem 6.1 in \cite{A},
it suffices to show that the $I(S)$-adic topology of $R(S)$ 
coincides with its $I(\Ff)$-adic topology.

Let $n$ be the order of $S$ and consider the subring $A$ of $\IC$
generated by $\IZ$ and the $n$th roots of unity. Characters of 
representations of $S$ take values in $A$, since they are sums 
of eigenvalues of unitary matrices whose multiplicative orders 
divide $n$. Since $A$ is flat over $\IZ$, a similar argument 
to the one used in the proof of Lemma \ref{CharacterIsomorphism}
shows that $R_A(\Ff):= R(\Ff) \otimes A$ has a decomposition
\[ R_A(\Ff) = \higherlim{\Or(\Ff^c)}{} R_A(P) \]
and it can be seen as a subring of $\Map(X_{\Ff},A)$.

\begin{lem}
\label{PrimeIdeals}
Every prime ideal of $R_A(\Ff)$ is the restriction of a prime ideal of $\Map(X_{\Ff},A)$
and every prime ideal of $R(\Ff)$ is the restriction of a prime ideal of $R_A(\Ff)$.
\end{lem}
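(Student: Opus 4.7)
The plan is to establish both statements via the lying-over theorem for integral extensions, applied to the chain of inclusions $R(\Ff) \hookrightarrow R_A(\Ff) \hookrightarrow \Map(X_{\Ff},A)$. Concretely, I will show that each of these two ring inclusions is integral; then the classical lying-over theorem immediately yields, for every prime $\p$ in the smaller ring, a prime in the larger ring contracting to $\p$.

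For the second inclusion, I would first verify that the character map lands in $\Map(X_{\Ff},A)$: characters of unitary representations of $S$ are sums of $n$th roots of unity, so $\chi_{\Ff}$ restricted to $R_A(\Ff) = R(\Ff) \otimes A$ takes values in $A$. Injectivity of $R_A(\Ff) \hookrightarrow \Map(X_{\Ff},A)$ follows from \lemref{CharacterIsomorphism} after tensoring with $\IC$, using that $A$ is $\IZ$-flat. Now the key observation: since $X_{\Ff}$ is a finite set, $\Map(X_{\Ff},A)$ is a free $A$-module of finite rank; and because the trivial representation exhibits $A$ as a subring of $R_A(\Ff)$, the module $\Map(X_{\Ff},A)$ is a fortiori a finitely generated $R_A(\Ff)$-module. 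By the standard module-finite criterion, every element of $\Map(X_{\Ff},A)$ is then integral over $R_A(\Ff)$.

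For the first inclusion, the ring $A$ is generated over $\IZ$ by roots of unity, each of which satisfies a monic polynomial with integer coefficients, so $A$ is integral over $\IZ$. Consequently $R_A(\Ff) = R(\Ff) \otimes_{\IZ} A$ is generated as an $R(\Ff)$-algebra by integral elements, hence is itself integral over $R(\Ff)$. Applying lying-over to both extensions gives the two assertions of the lemma.

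The main subtlety, and really the only nonformal step, is the integrality of the second inclusion: one must check both that $\Map(X_{\Ff},A)$ is module-finite over $A$ (automatic from finiteness of $X_{\Ff}$) and that $A$ sits inside $R_A(\Ff)$ in a way that lets one upgrade module-finiteness over $A$ to module-finiteness over $R_A(\Ff)$. Everything else is formal once these ingredients are in place.
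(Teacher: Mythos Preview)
Your proposal is correct and follows essentially the same approach as the paper: both invoke the Cohen--Seidenberg lying-over theorem after establishing integrality via module-finiteness. The paper's version is slightly more economical, observing in one stroke that $\Map(X_{\Ff},A)$ is a finitely generated abelian group, so that every ring in the chain $\IZ \subseteq R(\Ff) \subseteq R_A(\Ff) \subseteq \Map(X_{\Ff},A)$ is module-finite over any smaller one; your argument unpacks the two extensions separately but arrives at the same conclusion by the same mechanism.
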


\begin{proof}
Since $\Map(X_{\Ff},A)$ is a finitely generated abelian group, it follows by 
the theorem of Cohen-Seidenberg (Theorem 3 in page 257 of \cite{SZ}) as in 
Lemma 6.2 of \cite{A}.
\end{proof}

Note that $\Map(X_{\Ff},A)$ is a finitely generated free $\IZ$-module, in particular
it is a Noetherian ring. Given $x \in X_{\Ff}$ and a prime ideal $\p$ of $A$, the set of 
maps $f$ with $f(x) \in \p$ is a prime ideal of $\Map(X_{\Ff},A)$ and all its prime 
ideals are of this form. Let us denote
\[ P_{\p,x} = \{ \chi \in R_A(\Ff) \mid \chi(x) \in \p \} \]
where $\chi(x)$ denotes the image of any element of $x$. By the previous lemma, all 
the prime ideals of $R_A(\Ff)$ have this form.

If $\p \neq (0)$ then $\p \cap \IZ$ must be a nontrivial prime ideal of $\IZ$, hence
$\p \cap \IZ = q(\p)\IZ$ for a certain prime number $q(\p)$. Moreover, $\p$ is a maximal ideal of $A$ and 
$ A/\p$ is a finite field of characteristic $q(\p)$. 

\begin{lem}
\label{CharacterRegular}
If $\p \neq (0)$ and $q(\p)=p$, then $\chi(s) \equiv \chi(1) \mod \p $ for any \mbox{$\chi \in R_A(\Ff)$} and any $s \in S$.
\end{lem}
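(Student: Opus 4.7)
The statement should be read in $A/\p$; equivalently, $\chi(s) - \chi(1) \in \p$. My plan is to reduce to characters of honest $S$-representations and exploit the fact that every eigenvalue of $\rho(s)$ is a $p$-power root of unity, which becomes $1$ modulo $\p$ because $A/\p$ has characteristic $p$.

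First, since $R_A(\Ff) \hookrightarrow R_A(S)$ and $R_A(S)$ is generated as an $A$-module by characters $\chi_\rho$ of unitary representations $\rho$ of $S$ (with values in $A$ by the definition of $A$), it suffices by $A$-linearity to prove the assertion when $\chi = \chi_\rho$. Fix such a $\rho \colon S \to U(V)$ of dimension $d = \chi(1)$ and let $\zeta_1,\dots,\zeta_d \in A$ be the eigenvalues of $\rho(s)$, so that $\chi(s) = \zeta_1 + \dots + \zeta_d$.

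Second, because $S$ is a finite $p$-group, $s$ has order $p^k$ for some $k \geq 0$, and each $\zeta_i$ is a $p^k$-th root of unity in $A$. Since $\p$ is a nonzero prime of $A$ with $q(\p) = p$, the quotient $A/\p$ is a field of characteristic $p$. The Frobenius (applied $k$ times) gives
\[ (\zeta_i - 1)^{p^k} \;=\; \zeta_i^{p^k} - 1 \;=\; 0 \quad \text{in } A/\p, \]
so $\zeta_i - 1 \in \p$ for each $i$. Summing, $\chi(s) - \chi(1) = \sum_i (\zeta_i - 1) \in \p$, which is exactly the desired equality in $A/\p$.

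There is no real obstacle here; the one point to notice is the identity $(\zeta-1)^{p^k} = \zeta^{p^k} - 1$ in characteristic $p$, which instantly forces every $p$-power root of unity to reduce to $1$ modulo any maximal ideal lying over $p$. The $\Ff$-invariance of $\chi$ plays no role in the argument beyond the inclusion $R_A(\Ff) \subseteq R_A(S)$, and indeed the same proof would give the statement for all of $R_A(S)$.
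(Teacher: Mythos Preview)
Your proof is correct and follows essentially the same route as the paper: reduce by $A$-linearity to characters of honest $S$-representations, then observe that every eigenvalue of $\rho(s)$ is a $p$-power root of unity and hence congruent to $1$ modulo $\p$. The only cosmetic difference is that the paper phrases the reduction as restricting to the cyclic subgroup $\langle s\rangle$ and decomposing into one-dimensional characters, and deduces $\zeta \equiv 1$ from the fact that the multiplicative group of the finite field $A/\p$ has order prime to $p$, whereas you diagonalize directly and invoke the Frobenius identity $(\zeta-1)^{p^k}=\zeta^{p^k}-1$; these are equivalent.
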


\begin{proof}
Note that $R_A(\Ff)$ is a subring of $R_A(S)$, hence we regard $\chi \in R_A(S)$.
Since the order of $s$ is a power of $p$, the proof of Lemma 6.3 in \cite{A} shows
that $\chi(s) \equiv \chi(1) \mod \p$.
\end{proof}

For simplicity, we will use the following notation. Given $x \in X_{\Ff}$, 
we will denote
\[ x_{\p} = \left\{ \begin{array}{ll}
                     1 & \text{ if } \p \neq (0) \text{ and } q(\p) = p \\
                     x & \text{ otherwise} \end{array} \right. \]
where $1$ denotes the $\Ff$-conjugacy class of $1$. With this notation and by
the previous lemma, we have $\chi(x) \equiv \chi(x_{\p}) \mod \p$ for any 
$\chi \in R_A(\Ff)$, any $x \in S$ and any prime ideal $\p$ of $A$.

\begin{lem}
The ideal $P_{\p,x}$ is contained in $P_{\q,y}$ if and only if $\p \subseteq \q$
and $ x_{\p} = y_{\q}$.
\end{lem}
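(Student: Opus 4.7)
The plan is to reduce to two ingredients: the identification $R_A(\Ff)/P_{\p,x} \cong A/\p$ via evaluation at $x$ (which controls $\p \subseteq \q$), together with Lemma \ref{CharacterRegular} and a character-separation argument modulo $\q$ (which controls the second condition).

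For the sufficient direction I would split on whether $q(\q) = p$. If $\q \neq (0)$ with $q(\q) = p$, Lemma \ref{CharacterRegular} gives $\chi(z) \equiv \chi(1) \pmod \q$ for every $\chi \in R_A(\Ff)$ and every $z \in S$; in particular $\chi \in P_{\p,x}$ yields $\chi(x) \in \p \subseteq \q$, and then $\chi(y) \equiv \chi(x) \equiv 0 \pmod \q$ places $\chi$ in $P_{\q,y}$. Otherwise the compatibility condition $x_\p = y_\q$ collapses to $x = y$, and $\chi(x) \in \p \subseteq \q$ combined with $x = y$ immediately yields $\chi \in P_{\q,y}$.

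For the necessary direction, the inclusion $A \hookrightarrow R_A(\Ff)$ as constants satisfies $a \in P_{\p,x}$ iff $a \in \p$, and similarly for $\q$, so intersecting with $A$ yields $\p \subseteq \q$. When $q(\q) = p$ the second condition is automatic. Otherwise ($\q = (0)$ or $q(\q) \neq p$) I argue by contradiction: assuming $x \neq y$ in $X_\Ff$, I seek $\chi \in R_A(\Ff)$ with $\chi(x) \not\equiv \chi(y) \pmod \q$. Then $\chi' := \chi - \chi(x) \cdot 1 \in R_A(\Ff)$ vanishes at $x$, so $\chi' \in P_{\p,x}$ (since $0 \in \p$); however $\chi'(y) = \chi(y) - \chi(x) \notin \q$, so $\chi' \notin P_{\q,y}$, contradicting $P_{\p,x} \subseteq P_{\q,y}$.

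The main obstacle is producing such a separating $\chi$, i.e., showing that the characters of $\Ff$-invariant representations separate $\Ff$-conjugacy classes modulo $\q$ whenever $\q = (0)$ or $q(\q) \neq p$. For $\q = (0)$ this is immediate from the corollary to Lemma \ref{CharacterIsomorphism}. For $\q \neq (0)$ with $q(\q) \neq p$, I would show that the cokernel of the character embedding $R_A(\Ff) \hookrightarrow \Map(X_\Ff,A)$ is a finitely generated $A$-module annihilated by a power of $|S| = p^n$, obtained by clearing denominators in the $\IC$-linear isomorphism of Lemma \ref{CharacterIsomorphism}. Since $p$ is a unit in $A/\q$, the cokernel vanishes modulo $\q$, so the reduced evaluation map $R_A(\Ff) \to \Map(X_\Ff, A/\q)$ is surjective and in particular separates $\Ff$-conjugacy classes modulo $\q$.
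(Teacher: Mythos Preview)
Your necessary direction has a genuine gap in the case $\q \neq (0)$ with $q(\q) = p$: you assert that ``the second condition is automatic'', i.e.\ that $x_\p = y_\q = 1$. But when $\p = (0)$ one has $x_\p = x$, and nothing in your argument rules out $x \neq 1$. Your cokernel argument cannot be stretched to cover this, since it relies on $p$ being invertible in $A/\q$, which fails precisely here. The paper does not skip this case: it treats all nonzero $\q$ at once by decomposing the $\Ff$-class $y_\q$ into $S$-conjugacy classes and invoking Lemma~3 of \cite{BT} to manufacture an integer-valued virtual character in $R_A(\Ff)$ supported exactly on $y_\q$ with value $\not\equiv 0 \pmod{q(\q)}$; this character is then shown to lie in $P_{\p,x} \setminus P_{\q,y}$ using $\chi(x)=\chi(x_\p)$ and $\chi(y)=\chi(y_\q)$.

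Where your argument does apply ($\q = (0)$ or $q(\q) \neq p$), it takes a genuinely different route from the paper. Instead of the explicit \cite{BT} construction, you argue that the cokernel of $R_A(\Ff) \hookrightarrow \Map(X_\Ff, A)$ is annihilated by $|S|$ and hence vanishes modulo $\q$, so the reduced evaluation map $R_A(\Ff) \to \Map(X_\Ff, A/\q)$ is surjective and separates $\Ff$-classes. This is a clean abstract substitute for the paper's hands-on character building; the annihilation claim does hold (orthogonality gives $|S|\cdot\Map(X_\Ff,A) \subseteq R_A(\Ff)$), so this portion is correct. One minor point in your sufficient direction: the reduction ``$x_\p = y_\q$ collapses to $x = y$'' when $\q = (0)$ or $q(\q) \neq p$ tacitly uses $x_\p = x$, which is true only because $\p \subseteq \q$ rules out $\p \neq (0)$ with $q(\p) = p$ (else $p \in \p \subseteq \q$); this deserves a sentence.
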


\begin{proof}
Suppose that $\p \subseteq \q$ and $x_{\p} = y_{\q}$. Let $\chi \in P_{\p,x}$, that is,
$\chi(x) \in \p$. Then we have $\chi(x_{\p}) \equiv \chi(x) \mod \p$ and so $\chi(x_{\p})
= \chi(y_{\q})$ belongs to $\p$. But $\chi(y) \equiv \chi(y_{\q}) \mod \q$, hence $\chi(y)$
belongs to $\q$. Therefore $\chi \in P_{\q,y}$.

On the other hand, $P_{\p,x} \subseteq P_{\q,y}$ implies $ \p = P_{\p,x} \cap A \subseteq P_{\q,y} \cap A = \q$.
Suppose $x_{\p} \neq y_{\q}$. The class $y_{\q}$ is the disjoint union of a finite number of $S$-conjugacy classes,
say of elements $y_1, \ldots, y_n$.

Assume $\q \neq (0)$ and $q(\q)=q$. By Lemma 3 in \cite{BT} there is a character $\chi_i$ in $R_A(S)$
that takes values in $\IZ$ and such that $\chi_i(y_i) \equiv 1 \mod q$ and $\chi_i(z)=0$ if 
$z$ is not $S$-conjugate to $y_i$. Let $a_i = \chi_i(y_i)$ and consider the character
\[ \chi = \sum_{i=1}^n \left( \prod_{j \neq i} a_j \right) \chi_i \]
which again lies in $R_A(S)$. This character also takes values in $\IZ$ and it satisfies 
\[ \chi(y_j) = \prod_{i=1}^n a_i \equiv 1 \mod q \]
for all $j$. Moreover $\chi(z) = 0$ if $z$ is not $S$-conjugate to $y_i$ for any $i$. Since this character is constant
on each $\Ff$-conjugacy class, it actually lies in $R_A(\Ff)$. And it satisfies $\chi(y_{\q}) \notin \q$ and $\chi(x_{\p})
\in \p$. Since $\chi(y) \equiv \chi(y_{\q}) \mod \q$ and $\chi(x) \equiv \chi(x_{\p}) \mod \p$, we have
$\chi(y) \notin \q$ and $\chi(x) \in \p$, which contradicts $P_{\p,x} \subseteq P_{\q,y}$.

If $\q = 0 $, it is well known that there is $\alpha_i \in R(S)$ such that 
$\alpha_i(y_i) \neq 0 $ and $\alpha_i(z) = 0$ if $z$ is not $S$-conjugate 
to $y_i$. Note that $b_i = \alpha_i(y_i)$ belongs to $A$. We consider 
\[ \alpha = \sum_{i=1}^n \left( \prod_{j \neq i} b_j \right) \alpha_i \]
which is a character in $R_A(S)$. It satisfies 
\[ \alpha(y_j) = \prod_{i=1}^n b_i \neq 0 \]
for all $j$ and $\alpha(z) = 0$ if $z$ is not $S$-conjugate to $y_i$ for any $i$. Since this character is constant
on each $\Ff$-conjugacy class, it belongs to $R_A(\Ff)$. And it satisfies $\alpha(x_{\p}) = 0$ and $\alpha(y_{\q}) \neq 0$.
Since $\alpha(x) \equiv \alpha(x_{\p}) \mod \p$ and $\alpha(y) \equiv \alpha(y_{\q}) \mod \q$, and $\p=\q=(0)$ in this
case, we obtain that $\alpha(x) \in \p$ and $\alpha(y) \notin \q$. This is again a contradiction with $P_{\p,x} \subseteq P_{\q,y}$.
\end{proof}

Note that all the prime ideals in $A$ are either $(0)$ or maximal ideals, hence
we can conclude:

\begin{prop}
\label{ClassificationIdeals}
The prime ideals of $R_A(\Ff)$ are all of the form $P_{\p,x}$. Two such
ideals $P_{\p,x} = P_{\q,y}$ coincide if and only if $\p=\q$ and $x_{\p} = y_{\q}$.
If $\p \neq (0)$, then $P_{\p,x}$ is a maximal prime ideal, while $P_{(0),x}$
is a minimal prime ideal. The maximal prime ideals containing $P_{(0),x}$ are
the ideals $P_{\p,x}$ with $\p \neq (0)$. The minimal prime ideals contained
in $P_{\p,x}$ for a certain $\p \neq (0)$ are the ideals $P_{(0),y}$ with $x_{\p} = y $.
\end{prop}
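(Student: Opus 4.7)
The proposition collects four consequences of the containment criterion established in the preceding lemma, so my plan is essentially to unpack that lemma together with \lemref{PrimeIdeals}.

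First I would establish the classification of prime ideals. By \lemref{PrimeIdeals}, every prime ideal of $R_A(\Ff)$ is the contraction of a prime ideal of $\Map(X_{\Ff},A)$. Since $\Map(X_{\Ff},A)$ is isomorphic to a finite product of copies of $A$ (one for each $\Ff$-conjugacy class), its prime ideals are precisely the sets $\{f : f(x) \in \p\}$ for some $x \in X_{\Ff}$ and some prime $\p$ of $A$. Contracting to $R_A(\Ff)$ yields exactly the ideals $P_{\p,x}$, proving the first assertion. The equality criterion is then immediate from the containment criterion of the preceding lemma applied in both directions: $P_{\p,x} = P_{\q,y}$ forces $\p \subseteq \q$ and $\q \subseteq \p$, so $\p = \q$, and similarly $x_{\p} = y_{\q}$.

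Next I would treat maximality and minimality by bounding the containment relations. Suppose $\p \neq (0)$. Since every nonzero prime of $A$ is maximal, any prime $P_{\q,y}$ strictly containing $P_{\p,x}$ would require $\p \subsetneq \q$, contradicting maximality of $\p$; hence $P_{\p,x}$ is maximal. For the minimality of $P_{(0),x}$, any prime $P_{\q,y}$ contained in $P_{(0),x}$ must satisfy $\q \subseteq (0)$, so $\q = (0)$, and then $y_{(0)} = x_{(0)}$ gives $y = x$; thus $P_{(0),x}$ has no strictly smaller prime underneath.

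Finally, for the containment structure of maximal primes above minimal ones, I would simply invoke the preceding lemma again. A prime $P_{\p,y}$ with $\p \neq (0)$ contains $P_{(0),x}$ iff $(0) \subseteq \p$ (always true) and $x = x_{(0)} = y_{\p}$; unwinding, these are precisely the maximal ideals $P_{\p,x}$ with $\p \neq (0)$ (using the normalization $y_{\p}$, so the second index becomes $x$). In the opposite direction, the minimal primes contained in a given maximal $P_{\p,x}$ with $\p \neq (0)$ are the $P_{(0),y}$ with $(0) \subseteq \p$ and $y = y_{(0)} = x_{\p}$, giving exactly those minimal primes $P_{(0),y}$ with $y = x_{\p}$.

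The only real obstacle is the very first step, namely deducing that every prime of $R_A(\Ff)$ lies over some $P_{\p,x}$; but this is already packaged in \lemref{PrimeIdeals} via Cohen--Seidenberg, since $R_A(\Ff) \hookrightarrow \Map(X_{\Ff},A)$ is an integral extension (the latter is finitely generated as an abelian group, hence as an $R_A(\Ff)$-module). Everything else is a bookkeeping exercise with the containment criterion already in hand.
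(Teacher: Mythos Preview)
Your proposal is correct and matches the paper's approach: the paper gives no proof at all beyond the remark ``Note that all the prime ideals in $A$ are either $(0)$ or maximal ideals, hence we can conclude,'' and what you have written is precisely the routine unpacking of \lemref{PrimeIdeals} together with the containment lemma that this remark invites. There is nothing to add or correct.
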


In short there are the following ideals. There is a $P_{(0),x}$ for each $x \in X_{\Ff}$, 
a $P_{\p,x}$ for each $x \in X_{\Ff}$ and each nontrivial prime ideal $\p$ of
$A$ with $q(\p) \neq p$, and a $P_{\p,1}$ for each nontrivial prime ideal $\p$ of $A$
with $q(\p)=p$.

\begin{cor}
The spectra of $R_A(\Ff)$ and $R(\Ff)$ are connected in the Zariski topo-logy.
\end{cor}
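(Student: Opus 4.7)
The plan is to prove connectedness of $\mathrm{Spec}\, R_A(\Ff)$ first and then to transfer connectedness to $\mathrm{Spec}\, R(\Ff)$ via an integral extension argument. By Proposition \ref{ClassificationIdeals}, every minimal prime of $R_A(\Ff)$ has the form $P_{(0),x}$ for $x\in X_\Ff$, so the irreducible components of $\mathrm{Spec}\, R_A(\Ff)$ are the closed sets $V(P_{(0),x})$, each of which is irreducible and therefore connected. It will suffice to exhibit a single prime lying in every one of these irreducible components: once that is done, the spectrum will be a union of connected sets through a common point, hence connected.

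The candidate hub is any prime of the form $P_{\p,1}$ with $q(\p)=p$. Such a $\p$ exists because $A$ is integral over $\IZ$, so by the lying-over theorem there is a prime of $A$ above $p\IZ$. For any $x\in X_\Ff$ and any $\chi\in P_{(0),x}$ we have $\chi(x)=0$, and by Lemma \ref{CharacterRegular} we also have $\chi(x)\equiv\chi(1)\pmod{\p}$, so $\chi(1)\in\p$, i.e.\ $\chi\in P_{\p,1}$. Hence $P_{(0),x}\subseteq P_{\p,1}$ for every $x\in X_\Ff$, so $P_{\p,1}$ sits in every $V(P_{(0),x})$ and $\mathrm{Spec}\, R_A(\Ff)$ is connected.

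For $\mathrm{Spec}\, R(\Ff)$ I would note that the inclusion $R(\Ff)\hookrightarrow R_A(\Ff)=R(\Ff)\otimes_\IZ A$ is integral, since $A$ is integral over $\IZ$. By Lemma \ref{PrimeIdeals}, every prime of $R(\Ff)$ is the contraction of a prime of $R_A(\Ff)$, so the induced map $\mathrm{Spec}\, R_A(\Ff)\to\mathrm{Spec}\, R(\Ff)$ is a continuous surjection. A continuous surjective image of a connected space is connected, so $\mathrm{Spec}\, R(\Ff)$ is connected. I do not foresee a serious obstacle here: the statement is essentially packaged into Proposition \ref{ClassificationIdeals} together with Lemma \ref{CharacterRegular}, and the only point requiring notice is that the family of maximal primes $P_{\p,1}$ with $q(\p)=p$ functions as a common hub linking all of the irreducible components of $\mathrm{Spec}\, R_A(\Ff)$ together.
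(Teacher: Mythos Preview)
Your proof is correct and follows the approach the paper intends. The paper states the corollary without proof immediately after Proposition~\ref{ClassificationIdeals}, and the natural reading is precisely your argument: every irreducible component $V(P_{(0),x})$ contains the maximal prime $P_{\p,1}$ for any $\p$ over $p$ (this is the content of ``the maximal prime ideals containing $P_{(0),x}$ are the ideals $P_{\p,x}$'' together with $P_{\p,x}=P_{\p,1}$ when $q(\p)=p$), so all components meet and $\mathrm{Spec}\,R_A(\Ff)$ is connected; connectedness of $\mathrm{Spec}\,R(\Ff)$ then follows from the surjectivity in Lemma~\ref{PrimeIdeals}. Your direct verification of $P_{(0),x}\subseteq P_{\p,1}$ via Lemma~\ref{CharacterRegular} is in fact a cleaner route than quoting the containment clauses of the proposition, but the underlying idea is the same.
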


\begin{lem}
\label{AugmentationIdeal}
The ideal $P_{(0),1}$ coincides with $ I(\Ff) \otimes A  $.
\end{lem}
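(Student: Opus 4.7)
The plan is to unwind the definitions of both sides and observe that each is characterised as the kernel of evaluation-at-the-identity on $R_A(\Ff)$.

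First I would rewrite $P_{(0),1}$ explicitly. By definition $P_{(0),1} = \{\chi \in R_A(\Ff) \mid \chi(1) \in (0)\}$, where $1$ denotes the $\Ff$-conjugacy class of the identity element of $S$, so membership in $P_{(0),1}$ is just the condition $\chi(1) = 0$ in $A$. Since a character of a representation $\rho$ satisfies $\chi_\rho(1) = \dim(\rho)$, this is precisely the kernel of the $A$-linear map $\mathrm{aug}_A \colon R_A(\Ff) \to A$ that extends the augmentation.

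Next I would identify $I(\Ff) \otimes A$ with $\ker(\mathrm{aug}_A)$. The short exact sequence $0 \to I(\Ff) \to R(\Ff) \to \IZ \to 0$ remains exact after tensoring with $A$, because $A$ is a subring of $\IC$ and hence torsion-free, so flat over $\IZ$. This gives the exact sequence
\[ 0 \to I(\Ff) \otimes A \to R_A(\Ff) \to A \to 0, \]
where the right-hand map is exactly $\mathrm{aug}_A$ since it sends $[\rho] \otimes a$ to $a \cdot \dim(\rho) = a \cdot \chi_\rho(1)$. Thus $I(\Ff) \otimes A$ and $P_{(0),1}$ are both equal to $\ker(\mathrm{aug}_A)$, and the lemma follows.

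There is really no obstacle here; the statement is essentially a definitional identification, and the only thing to check is the flatness argument that legitimises writing $I(\Ff) \otimes A$ as a subset of $R_A(\Ff)$ at all.
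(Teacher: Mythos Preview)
Your proof is correct and is essentially the argument the paper has in mind: the paper simply refers to Lemma 6.4 of \cite{A}, whose proof is exactly this identification of $P_{(0),1}$ with the kernel of the $A$-linearised augmentation via flatness of $A$ over $\IZ$. You have just written out that argument in full.
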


\begin{proof}
See the proof of Lemma 6.4 in \cite{A}.
\end{proof}

Given $y \in X_S$ and a prime ideal $\p$ of $A$, we consider 
\[ Q_{\p,y} = \{ \chi \in R_A(S) \mid \chi(y) \in \p \} \]
and denote by $\res$ the maps $R_A(\Ff) \to R_A(S)$ and 
$R(\Ff) \to R(S)$ induced by restriction.

\begin{lem}
\label{Auxiliar}
If $\res^{-1}(Q_{\p,y}) = P_{\q,1}$, then $ Q_{\p,y} = Q_{\q,1}$.
\end{lem}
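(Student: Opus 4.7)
The plan is to first show $\p = \q$, and then reduce to proving $Q_{\p,y} = Q_{\p,1}$ by a case split on whether the prime $p$ lies in $\p$.

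First I would observe that each $a \in A$, viewed as a scalar multiple of the trivial character, lies in both $R_A(\Ff)$ and $R_A(S)$ with $\res(a) = a$, and evaluates to $a$ on every conjugacy class. Consequently $Q_{\p,y} \cap A = \p$ and $P_{\q,1} \cap A = \q$; intersecting both sides of the hypothesis $\res^{-1}(Q_{\p,y}) = P_{\q,1}$ with $A$ forces $\p = \q$. It remains to show $Q_{\p,y} = Q_{\p,1}$, for which I would argue according to the two cases below.

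Case 1: $p \in \p$. Then $\p \neq (0)$ and $q(\p) = p$. The proof of Lemma~\ref{CharacterRegular} only uses that every element of $S$ has $p$-power order together with the fact that $A/\p$ is a finite field of characteristic $p$, so it applies verbatim to any $\chi \in R_A(S)$, yielding $\chi(y) = \chi(1)$ in $A/\p$. Hence $\chi(y) \in \p$ if and only if $\chi(1) \in \p$, which gives $Q_{\p,y} = Q_{\p,1}$.

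Case 2: $p \notin \p$. Let $\rho$ denote the regular representation of $S$, which the text recalls is $\Ff$-invariant and so defines an element of $R_A(\Ff)$. Since $\chi_{\rho}(1) = |S| = p^n$ and $\q = \p$ does not contain $p$ (hence, by primality, does not contain $p^n$), we have $\chi_{\rho} \notin P_{\q,1} = \res^{-1}(Q_{\p,y})$, so $\chi_{\rho}(y) \notin \p$. But $\chi_{\rho}(y) = 0 \in \p$ whenever $y \neq 1$, so we must have $y = 1$, and the desired equality holds trivially.

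The substantive ingredient of the argument is the use of the $\Ff$-invariant regular representation as a test character in Case~2 to pin down $y = 1$. The only subtlety I anticipate is checking that Lemma~\ref{CharacterRegular} genuinely transports to $R_A(S)$ without the $\Ff$-invariance hypothesis, but this is immediate from rereading the earlier proof: it decomposes a one-dimensional restriction, and no fusion-theoretic input is involved.
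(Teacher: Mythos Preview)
Your proof is correct but takes a different route from the paper's. The paper's argument is essentially a two-line application of Proposition~\ref{ClassificationIdeals}: one observes directly from the definitions that $\res^{-1}(Q_{\p,y}) = P_{\p,r(y)}$ (where $r \colon X_S \to X_{\Ff}$ is the quotient map), so the hypothesis reads $P_{\p,r(y)} = P_{\q,1}$; the classification then forces $\p = \q$ and $r(y)_{\p} = 1$, whence $y_{\p} = 1$ and $Q_{\p,y} = Q_{\q,1}$ by the analogous classification for $R_A(S)$. You instead extract $\p = \q$ by intersecting with the scalars $A$, and then bypass the full classification: when $q(\p) = p$ you invoke Lemma~\ref{CharacterRegular} (correctly noting that its proof uses no fusion-theoretic input and hence applies to all of $R_A(S)$), and in the complementary case you use the regular representation of $S$ as a single test character to force $y = 1$. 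The paper's approach is shorter given that the structural classification is already available; yours is more self-contained and makes the role of the regular representation pleasantly explicit, at the cost of a case split that the paper's uniform formulation via $x_{\p}$ absorbs.
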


\begin{proof}
Note that $\res^{-1}(Q_{\p,y}) = P_{\p,r(y)}$, hence $\p = \q$
and $r(y)_{\p} = 1$. Hence $y_{\p}=1$ and so $Q_{\p,y} = Q_{\q,1}$.
\end{proof}

\begin{lem}
The prime ideals of $R(S)$ which contain $\res(I(\Ff))$ are the same
as those which contain $I(S)$.
\end{lem}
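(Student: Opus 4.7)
The plan is to combine Proposition \ref{ClassificationIdeals} with Lemmas \ref{PrimeIdeals}, \ref{AugmentationIdeal}, and \ref{Auxiliar}. One inclusion is immediate: since restriction preserves the dimension of a virtual representation, $\res(I(\Ff)) \subseteq I(S)$, so any prime of $R(S)$ containing $I(S)$ contains $\res(I(\Ff))$.

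For the converse, let $\mathfrak{P}$ be a prime of $R(S)$ with $\mathfrak{P} \supseteq \res(I(\Ff))$. By Lemma \ref{PrimeIdeals} applied to the trivial fusion system on $S$, I would write $\mathfrak{P} = Q_{\p,y} \cap R(S)$ for some prime $Q_{\p,y}$ of $R_A(S)$. Since $\res(I(\Ff)) \subseteq \mathfrak{P} \subseteq Q_{\p,y}$ and $A$ is flat over $\IZ$, this yields $Q_{\p,y} \supseteq \res(I(\Ff) \otimes A) = \res(P_{(0),1})$, using Lemma \ref{AugmentationIdeal}. Equivalently, $\res^{-1}(Q_{\p,y}) \supseteq P_{(0),1}$.

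A direct character-evaluation computation shows $\res^{-1}(Q_{\p,y}) = P_{\p,r(y)}$, where $r\colon X_S \to X_{\Ff}$ is the quotient map: indeed $\res(\chi)(y) = \chi(r(y))$ for any $\chi \in R_A(\Ff)$. By the inclusion criterion for these prime ideals (the unnamed lemma preceding Proposition \ref{ClassificationIdeals}), the containment $P_{(0),1} \subseteq P_{\p,r(y)}$ forces $r(y)_{\p} = 1$. Unpacking the definition of $(-)_{\p}$, this means either $q(\p) = p$, or else $r(y) = 1$; in the latter case $y = 1$ because morphisms in $\Ff$ are injective group homomorphisms and hence fix the identity. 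In either case, using Lemma \ref{CharacterRegular} when $q(\p) = p$, one has $\res^{-1}(Q_{\p,y}) = P_{\p,1}$, so Lemma \ref{Auxiliar} gives $Q_{\p,y} = Q_{\p,1}$.

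It remains to note that $Q_{\p,1} \supseteq Q_{(0),1}$ by the same inclusion criterion, and that $Q_{(0),1} = I(S) \otimes A$ is the augmentation ideal of $R_A(S)$ (the trivial-fusion case of Lemma \ref{AugmentationIdeal}). Contracting to $R(S)$ therefore yields $\mathfrak{P} \supseteq Q_{(0),1} \cap R(S) = I(S)$, as required. The only nonroutine points are the identification $\res^{-1}(Q_{\p,y}) = P_{\p,r(y)}$ and the observation that $1$ is $\Ff$-conjugate only to itself, so that $r(y) = 1$ in $X_{\Ff}$ forces $y = 1$ in $X_S$; the rest is bookkeeping with the already-established structure of $\operatorname{Spec} R_A(\Ff)$ and $\operatorname{Spec} R_A(S)$.
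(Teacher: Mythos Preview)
Your argument is correct and follows essentially the same route as the paper's proof: lift a prime of $R(S)$ to some $Q_{\p,y}$ in $R_A(S)$, use Lemma~\ref{AugmentationIdeal} to see that $P_{(0),1}\subseteq \res^{-1}(Q_{\p,y})$, apply the classification of primes to force $\res^{-1}(Q_{\p,y})=P_{\q,1}$, and then invoke Lemma~\ref{Auxiliar} to conclude $Q_{\p,y}=Q_{\q,1}\supseteq I(S)\otimes A$. Your version is a little more explicit (you compute $\res^{-1}(Q_{\p,y})=P_{\p,r(y)}$ directly and unpack the inclusion criterion), which makes the appeal to Lemma~\ref{Auxiliar} somewhat redundant since you have essentially reproved it along the way, but there is no gap.
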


\begin{proof}
The proof is practically the same as that of Lemma 6.7 in \cite{A}. Since 
$\res(I(\Ff)) \subseteq I(S)$, the prime ideals that contain $I(S)$ also 
contain $\res(I(\Ff))$.

Let $K$ be a prime ideal of $R(S)$ which contains $\res(I(\Ff))$.
By Lemma \ref{PrimeIdeals}, there is a prime ideal $\p$ of $A$
and $x \in X_S$ such that $ K = R(S) \cap Q_{\p,x}$. Therefore
$\res(I(\Ff)) \subseteq Q_{\p,x}$. Recall that $ I(\Ff) \otimes A=P_{(0),1}$
by Lemma \ref{AugmentationIdeal} and so
\[ \res(P_{(0),1}) = \res(I(\Ff) \otimes A) \subseteq \res(I(\Ff)) \otimes A \subseteq Q_{\p,x} \]
and so $ P_{(0),1} \subseteq \res^{-1}(Q_{\p,x})$. By Lemma \ref{ClassificationIdeals},
we have $\res^{-1}(Q_{\p,x}) = P_{\q,1}$ for a certain prime ideal $\q$ of $A$ and
by Lemma \ref{Auxiliar}, $Q_{\p,x} = Q_{\q,1}$. In particular $I(S) \subseteq Q_{\p,x}$
and so $I(S) \subseteq K$.
\end{proof}

\begin{thm}
\label{TopologiesOnR(S)}
The $I(S)$-adic topology
of $R(S)$ coincides with its $I(\Ff)$-adic topo-logy.
\end{thm}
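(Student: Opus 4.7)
The plan is to assemble the pieces developed in this section and finish with a standard Noetherian commutative algebra argument, following the end of Atiyah's proof of Theorem 6.1 in \cite{A}. First I would observe that the $I(\Ff)$-adic topology on the $R(\Ff)$-module $R(S)$ coincides with the $J$-adic topology on $R(S)$ regarded as a ring, where $J := \res(I(\Ff)) \cdot R(S)$ is the ideal of $R(S)$ generated by the image of $I(\Ff)$. This is immediate from the fact that $R(\Ff) \to R(S)$ is a ring homomorphism, so $I(\Ff)^k \cdot R(S) = J^k$ for every $k$.

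Next I would note that $R(S)$ is a finitely generated free abelian group, hence Noetherian as a ring. The commutative augmentation diagram from the previous lemma gives $\res(I(\Ff)) \subseteq I(S)$, and therefore $J \subseteq I(S)$. Thus the $I(S)$-adic topology on $R(S)$ is at least as fine as the $J$-adic topology.

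For the reverse comparison, the key input is the previous lemma: the prime ideals of $R(S)$ containing $J$ are precisely the prime ideals containing $I(S)$. In a Noetherian ring this forces $\sqrt{J} = \sqrt{I(S)}$, and consequently some power $I(S)^n$ is contained in $J$. Raising to powers, $I(S)^{nk} \subseteq J^k$ for every $k$, and the two adic topologies coincide.

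The substantive work has already been carried out in the lemmas preceding the theorem, so I do not expect a real obstacle here; the argument is essentially bookkeeping. The only subtlety worth verifying is the first step — identifying the submodule-adic topology defined by an ideal of $R(\Ff)$ acting on $R(S)$ with the ring-adic topology defined by the extended ideal $J$ in $R(S)$ — but this is immediate because $\res$ is a ring homomorphism.
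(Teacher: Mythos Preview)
Your proposal is correct and follows essentially the same route as the paper: both arguments invoke the previous lemma to conclude that $J = \res(I(\Ff)) R(S)$ and $I(S)$ have the same radical in the Noetherian ring $R(S)$, and then use that ideals with equal radical define the same adic topology. Your write-up simply makes explicit the identification $I(\Ff)^k \cdot R(S) = J^k$ and the inclusion $I(S)^n \subseteq J$, which the paper leaves implicit in its one-line reference to Atiyah.
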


\begin{proof}
Just like Theorem 6.1 in \cite{A}, it follows from the previous lemma and 
the fact that in a Noetherian ring, the $J$-adic topology is the same as the $\rad(J)$-adic topology. 
\end{proof}

\begin{cor}
\label{TopologiesCoincide}
Let $P$ be a subgroup of $S$. Then the $I(P)$-adic topology
of $R(P)$ coincides with its $I(\Ff)$-adic topology.
\end{cor}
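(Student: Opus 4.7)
The plan is to reduce the statement to Theorem~\ref{TopologiesOnR(S)} by interposing the ring $R(S)$ between $R(\Ff)$ and $R(P)$, and applying the classical result of Atiyah for subgroup inclusions. Concretely, Atiyah's Theorem 6.1 in \cite{A}, applied to the inclusion $P\leq S$, says that the $I(P)$-adic topology and the $I(S)$-adic topology on $R(P)$ coincide. So it is enough to show that the $I(S)$-adic topology and the $I(\Ff)$-adic topology on $R(P)$ coincide, where $R(P)$ is regarded as an $R(\Ff)$-module via the ring homomorphism $\pi\colon R(\Ff)\to R(S)\to R(P)$.

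One comparison is immediate. Since the map $R(\Ff)\to R(S)$ is compatible with augmentations, it carries $I(\Ff)$ into $I(S)$, and hence
\[ I(\Ff)^n R(P) \subseteq I(S)^n R(P) \]
for every $n\geq 0$. For the reverse comparison, apply Theorem~\ref{TopologiesOnR(S)}: the $I(S)$-adic and $I(\Ff)$-adic topologies on $R(S)$ as an $R(\Ff)$-module agree, so given $n$ there exists $k$ with $I(S)^k \subseteq I(\Ff)^n R(S)$ as ideals of $R(S)$. Pushing this inclusion along $R(S)\to R(P)$ (that is, multiplying by $R(P)$) yields
\[ I(S)^k R(P) \subseteq I(\Ff)^n R(S)\cdot R(P) = I(\Ff)^n R(P). \]
Together the two comparisons show the $I(S)$-adic and $I(\Ff)$-adic topologies on $R(P)$ agree, and combining with Atiyah's theorem gives the desired equality with the $I(P)$-adic topology.

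There is no real obstacle here: the corollary is a formal consequence of Theorem~\ref{TopologiesOnR(S)} together with the classical comparison for a subgroup inclusion, the only ingredient being the functoriality of $J$-adic topologies under the ring map $\pi$.
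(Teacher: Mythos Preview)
Your proof is correct and follows essentially the same approach as the paper, which remarks (just before Lemma~\ref{PrimeIdeals}) that by Atiyah's Theorem~6.1 it suffices to prove Theorem~\ref{TopologiesOnR(S)}. You have simply made explicit the routine verification that the topology comparison on $R(S)$ passes to $R(P)$ along the restriction map.
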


\section{The completion theorem}
\label{section:completion}

In this section we show a completion theorem for the $K$-theory
of a $p$-local finite group $(S,\Ff,\Ll)$. Note that the homomorphism
\[ K(|\Ll| \pcom) \to K(BS) \]
induced by the map $BS \to |\Ll| \pcom$ coincides with composing the
isomorphism 
\[ K(|\Ll| \pcom) \cong \higherlim{\Or(\Ff^c)}{} K(BP) \]
from Theorem 4.2 of \cite{CCM} with the projection to the factor $K(BS)$.
In particular, it is injective and we identify $K(|\Ll| \pcom)$ as the subring
of stable elements of $K(BS)$. That is, elements $y$ such that $\res_P^S(y) = f^*(y)$ 
for all $P \leq S$ and any $f \in \Hom_{\Ff}(P,S)$. Here $\res_P^S$ is the map
induced in $K$-theory by the inclusion of $P$ in $S$.

\begin{thm}
\label{completiontheorem}
Given a $p$-local finite group $(S,\Ff,\Ll)$, the completion of $R(\Ff)$ with respect to the ideal $I(\Ff)$ is isomorphic 
to the $K$-theory ring of $|\Ll| \pcom$.
\end{thm}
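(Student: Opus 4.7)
The plan is to combine the stable elements formulas for both sides with Atiyah's classical completion theorem applied objectwise. By Proposition 5.7 of \cite{CCM} we have $R(\Ff) \cong \higherlim{\Or(\Ff^c)}{} R(P)$, and by the stable elements theorem cited earlier, $K(|\Ll|\pcom) \cong \higherlim{\Or(\Ff^c)}{} K(BP)$. The idea is to complete the first limit diagram at $I(\Ff)$, identify each term with $K(BP)$ via Atiyah, and then commute completion with the limit.

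For each $\Ff$-centric subgroup $P$, Atiyah's theorem provides a natural isomorphism $R(P)^{\wedge}_{I(P)} \cong K(BP)$. By Corollary \ref{TopologiesCoincide} the $I(P)$-adic and $I(\Ff)$-adic topologies on $R(P)$ coincide, so we may replace the left-hand side with $R(P)^{\wedge}_{I(\Ff)}$. Naturality of Atiyah's completion map with respect to inclusions and conjugations (which induce $R(\Ff)$-module homomorphisms by construction of the $R(\Ff)$-module structure through restriction) ensures these isomorphisms assemble into an isomorphism of diagrams of $R(\Ff)$-modules indexed by $\Or(\Ff^c)$.

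The crux of the proof is therefore to commute $I(\Ff)$-adic completion with this inverse limit. I would argue as follows: by the corollary following Lemma \ref{CharacterIsomorphism}, $R(\Ff)$ is a finitely generated free $\IZ$-module, with rank equal to the number of $\Ff$-conjugacy classes; in particular it is Noetherian. Each $R(P)$ is a finitely generated $\IZ$-module, hence a finitely generated $R(\Ff)$-module via restriction. The limit $\higherlim{\Or(\Ff^c)}{} R(P)$ can be written as the equalizer of two homomorphisms between finite products of the $R(P)$ and is therefore a finitely generated $R(\Ff)$-submodule of such a product. Since $I(\Ff)$-adic completion is exact on finitely generated modules over the Noetherian ring $R(\Ff)$, it preserves this finite equalizer, yielding
\[
R(\Ff)^{\wedge}_{I(\Ff)} \cong \higherlim{\Or(\Ff^c)}{} R(P)^{\wedge}_{I(\Ff)} \cong \higherlim{\Or(\Ff^c)}{} K(BP) \cong K(|\Ll|\pcom).
\]

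The main obstacle is this interchange of limit and completion; once the Noetherian and finite generation hypotheses above are confirmed, the rest of the argument is formal, following from Atiyah's theorem, its naturality, and the coincidence of topologies established in Section \ref{section:ideals}.
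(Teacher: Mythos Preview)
Your proposal is correct and follows essentially the same approach as the paper: both use Atiyah's completion theorem objectwise together with Corollary~\ref{TopologiesCoincide}, naturality, and exactness of $I(\Ff)$-adic completion on finitely generated modules over the Noetherian ring $R(\Ff)$ to commute completion with the inverse limit. The paper phrases the interchange step via the cochain complex $C^*(\Or(\Ff^c);R)$ and its $H^0$, whereas you describe the limit as a finite equalizer, but these are the same argument; the only point you omit is the final observation that the isomorphism is a ring map, which follows because it is the restriction of the ring isomorphism $R(S)^{\wedge}_{I(\Ff)} \to K(BS)$.
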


\begin{proof}
The map $ \alpha_G \colon R(G) \to K(BG)$ constructed in Section 7 of \cite{A}
is natural with respect to group homomorphisms, hence it defines a chain map
\[ \alpha_* \colon C^*(\Or(\Ff^c);R) \to C^*(\Or(\Ff^c);K) \]
where $R$ and $K$ are the contravariant functors $\Or(\Ff^c) \to \Ab$ which send $P$ to $R(P)$ and
$K(BP)$, respectively. Here $C^*(D;F)$ denotes the cochain complex of the category $D$ with coefficients
in a contravariant functor $F \colon D \to \Ab$, which can be used to compute the higher limits of the
functor $F$ (see Lemma 2 in \cite{O0} for instance). 

For any morphism $f \colon P \to Q$ in $\Or(\Ff^c)$, we have a commutative diagram
\[
\diagram
R(\Ff) \dto_{\res_P^S \res} \drto^{\res_Q^S \res} & \\
R(P) \rto_{f^*} & R(Q)
\enddiagram
\]
and therefore $f^*$ is a homomorphism of $R(\Ff)$-modules. So we can regard
$R$ as a contravariant functor $\Or(\Ff^c) \to R(\Ff)-\Mod$ and $C^*(\Or(\Ff^c);R)$
as a complex of $R(\Ff)$-modules. Completion with respect to the ideal $I(\Ff)$
is an additive functor, so we have an isomorphism of complexes
\[ C^*(\Or(\Ff^c);R)^{\wedge}_{I_{\Ff}} \cong C^*(\Or(\Ff^c);R^{\wedge}_{I_{\Ff}}) \]
where $R^{\wedge}_{I_{\Ff}} \colon \Or(\Ff^c) \to \Ab$ is the contravariant functor
that sends $P$ to $R(P)^{\wedge}_{I_{\Ff}}$.

The homomorphism $\alpha_P$ induces an isomorphism $R(P) ^{\wedge}_{I(P)} \to K^*(BP)$ by
Atiyah's completion theorem and the $I(P)$-adic topology coincides with the $I(\Ff)$-adic 
topo-logy by Corollary \ref{TopologiesCoincide}. Therefore the chain map $\alpha_*$ induces 
an isomorphism of chain complexes.
\[ C^*(\Or(\Ff^c);R^{\wedge}_{I_{\Ff}}) \cong C^*(\Or(\Ff^c);K) \]

On the other hand, since $R(\Ff)$ is a Noetherian ring, completion of finitely ge-nerated 
$R(\Ff)$-modules with respect to the ideal $I(\Ff)$ is exact. Therefore we obtain an
isomorphism of abelian groups
\begin{align*}
K(|\Ll| \pcom) & \cong \higherlim{\Or(\Ff^c)}{} K(BP) \\
               & \cong H^0( C^*(\Or(\Ff^c);K) ) \\
               & \cong H^0( C^*(\Or(\Ff^c);R )^{\wedge}_{I_{\Ff}} ) \\
               & \cong H^0( C^*(\Or(\Ff^c);R ) )^{\wedge}_{I_{\Ff}}  \\
               & \cong \left( \higherlim{\Or(\Ff^c)}{} R(P) \right) ^{\wedge}_{I_{\Ff}}  \\
               & \cong R(\Ff) ^{\wedge}_{I_{\Ff}} 
\end{align*}
where the first isomorphism holds by Theorem 4.2 in \cite{CCM}. This isomorphism
is induced by restriction to $K(BS)$ and so the isomorphism from $R(\Ff) ^{\wedge}_{I_{\Ff}}$
to $K(|\Ll| \pcom)$ is induced by the restriction of the ring isomorphism $R(S)^{\wedge}_{I_{\Ff}} \to K(BS)$ 
to $R(\Ff) ^{\wedge}_{I_{\Ff}}$. This shows that in fact it is a ring isomorphism.
\end{proof}

For an abelian group $M$, we use the notation $M \pcom = M \otimes \IZ \pcom$.

\begin{prop}
Given a $p$-local finite group $(S,\Ff,\Ll)$, the completion of $R(\Ff) \pcom$ with respect to the ideal $I(\Ff) \pcom$ is isomorphic to the $p$-adic $K$-theory ring of $|\Ll| \pcom$.
\end{prop}

\begin{proof}
It is well known that the topology induced by $I(S)$ coincides with the topo-logy induced
by $pI(S)$ and so
\[ \widetilde{K}(BS) \cong I(S) ^{\wedge}_{I(S)} \cong I(S) ^{\wedge}_{pI(S)} \cong J(S) ^{\wedge}_{J(S)} \]
where $J(S)=I(S) \pcom$. Hence the induced map $R(S) \pcom \to K(BS ; \IZ \pcom)$ induces an 
isomorphism $(R(S) \pcom) ^{\wedge}_{J(S)} \to K(BS ; \IZ \pcom)$
since
\[ (R(S) \pcom) ^{\wedge}_{J(S)} \cong (\IZ \pcom \oplus J(S)) ^{\wedge}_{J(S)} \cong \IZ \pcom \oplus \widetilde{K}(BS) \cong \IZ \pcom \oplus \widetilde{K}(BS ; \IZ \pcom) = K(BS;\IZ \pcom) \]
The last isomorphism holds because $\widetilde{H}^k(BS;\IZ[1/p])=0$ and $BU$ is simply connected and so
\[ \widetilde{K}(BS) \cong [BS,BU] \cong [BS,BU \pcom] = \widetilde{K}(BS ; \IZ \pcom) \]
by Theorem 1.4 in \cite{Mi}. The same argument in the proof of the previous theorem shows that the completion of $R(\Ff) \pcom $ at the ideal $J(\Ff) = I(\Ff) \pcom$ is isomorphic to the $p$-adic $K$-theory ring $K(|\Ll| \pcom;\IZ \pcom)$.
\end{proof}

\section{Twistings and twisted representations}
\label{section:twistedrepresentations}

In this section we introduce twisted representations of 
fusion systems and relate them to representations of central
extensions. We begin by determining an appropriate model for 
the twistings classified by the third integral cohomology
group.

Let $|\Ll| \pcom$ be the classifying space of the $p$-local finite group $(S,\Ff,\Ll)$
and let $p^n$ be the order of $S$. By Theorem 4.2 in \cite{CCM}, there is an isomorphism
\[ H^3(|\Ll| \pcom ; \IZ) \cong \higherlim{\Or(\Ff)}{} H^3(BS;\IZ) \]
and therefore $ p^n H^3(|\Ll| \pcom ;\IZ) = 0$. The exact sequence $0 \to \IZ \stackrel{p^n}{\to} \IZ \to \IZ/p^n \to 0 $ of coefficients leads to a short exact sequence
\[ 0 \longrightarrow H^2(|\Ll| \pcom ;\IZ) \longrightarrow H^2(|\Ll| \pcom ;\IZ/p^n) \stackrel{\beta_n}{\longrightarrow} H^3(|\Ll| \pcom;\IZ) \longrightarrow 0 \]
and this shows that any twisting in $H^3(|\Ll| \pcom;\IZ)$ comes from an element in
$H^2(|\Ll| \pcom;\IZ/p^n)$.

By these considerations and Lemma 6.12 in \cite{BCGLO}, we can model the twistings of $|\Ll| \pcom$ 
by $2$-cocycles on the quasicentric linking system $\Ll^q$ (see Definition 1.9 in \cite{BCGLO}) with 
values in a cyclic $p$-group $A$. That is, $\alpha$ is a function from pairs of composable morphisms 
in the quasicentric linking system $\Ll^q$ to $A$ such that $\alpha(f,g)$ vanishes if $f$ or $g$ is 
an identity morphism and for any triple $f$, $g$, $h$ of composable morphisms, the cocycle condition 
is satisfied:
\[ \alpha(g,h) - \alpha(gf,h) + \alpha(f,hg) - \alpha(f,g) = 0 \]
The distinguished morphism $ \delta_S \colon S \to \Aut_{\Ll^q}(S)$ together
with the inclusion of $\Aut_{\Ll^q}(S)$ in $\Ll^q$ define a functor $\calB S \to \Ll^q$.
Here $\calB S$ denotes the category with one object, whose set of morphisms is $S$ and
composition is given by $s \circ t = s \cdot t $, the multiplication in $S$. Restriction 
along this functor defines a $2$-cocycle for $\calB S$ with values in $A$ which we also 
denote by $\alpha$. Note that $\alpha$ is not a $2$-cocycle for $S$ in the usual sense,
but if we define $\alpha'(s,t) = \alpha(t,s)$, it is easy to check that $\alpha'$ satisfies
\[ \alpha'(t,u) - \alpha'(st,u) + \alpha'(s,tu) - \alpha'(s,t) = 0 \]
By abuse of notation, we denote this $2$-cocycle by $\alpha$ as well and refer to it
as a $2$-cocycle for $(S,\Ff,\Ll)$.

Now that we have determined a model for our twistings, we recall how we use $2$-cocycles 
to define twisted representations of groups, following \cite{K94}. From now on, $A$ is a cyclic
$p$-group which we regard inside $S^1$ as the subgroup of corresponding roots of unity.

\begin{defn}
Let $\alpha$ be a $2$-cocycle for a group $S$ with values in $A$. An $\alpha$-twisted 
representation of $S$ is a map $\rho \colon S \to U_n$ such that $\rho(1)$ is the identity matrix and
\[ \rho(s) \rho(t)= \alpha(s, t) \rho(st). \]
for any $s$, $t \in S$.
\end{defn}  

Two $\alpha$-twisted representations $\rho_1$, $\rho_2 \colon S \to U_n$ are equivalent if
there is a linear isomorphism $f \colon \IC^n \to \IC^n$ such that $ \rho_1 = f \rho_2 f^{-1}$.
We denote by $^{\alpha} \Rep_n(S)$ the set of equivalence classes of $\alpha$-twisted $n$-dimensional
representations of $S$. 

\begin{rem}
Recall that an $\Ff$-invariant representation of $S$ is a representation $\rho$
such that $\rho_{|P} = \rho_{|f(P)} \circ f$ in $\Rep(P,U_n)$ for any $f \in \Hom_{\Ff}(P,S)$.
However, note that $j_P$, $f \in \Hom_{\Ff}(P,S)$ induce maps
\begin{gather*}
f^* \colon {^{\alpha} \Rep_n(S) } \to {^{f^* \alpha} \Rep_n(P) } \\
j_P^* \colon {^{\alpha} \Rep_n(S) } \to {^{j_P^* \alpha} \Rep_n(P) }
\end{gather*}
But even if $[\alpha] \in H^2(S;A)$ is invariant with respect to the fusion
system, it is not necessarily the case that $ f^* \alpha = j_P^* \alpha$. In
general, they differ by a coboundary and so there is a bijection between 
the two sets of twisted representations of $P$, but they may not be equal.
\end{rem}

Recall that a $2$-cocycle $\alpha \colon S \times S \to A$ determines a central extension 
of $A$ by $S$ as follows. Let $S_{\alpha}$ be the set $A \times S$ 
with the operation
\[ (a_1,s_1) (a_2,s_2) = (a_1 + a_2 + \alpha(s_1,s_2), s_1 s_2) \]
This gives $S_{\alpha}$ the structure of a group which fits into a central extension
\[ 1 \to A \to S_{\alpha} \to S \to 1 \]
with the obvious homomorphisms. An $\alpha$-twisted representation $ \rho \colon S \to U_n$ 
defines a representation $\rho_{\alpha} \colon S_{\alpha} \to U_n$ by
\[ \rho_{\alpha}(a,s) = a \rho(s) \]
where we see $a$ as a complex number. We refer to $\rho_{\alpha}$ as the untwisting
of $\rho$.

By Theorem 6.13 in \cite{BCGLO} there is a bijective correspondence between the set of equivalence classes 
of central extensions of $(S,\Ff,\Ll)$ by $A$ and elements of $H^2(|\Ll| \pcom;A)$. Given a class in 
$H^2(|\Ll| \pcom;A)$ and a $2$-cocycle $\alpha$ for the corresponding class in $H^2(S;A)$, we denote by 
$(S_{\alpha},\Ff_{\alpha},\Ll_{\alpha})$ the corresponding $p$-local finite group.

\begin{defn}
\label{DefTwistedRepresentation}
Let $\alpha$ be a $2$-cocycle for the $p$-local finite group $(S,\Ff,\Ll)$. We say that an $\alpha$-twisted 
representation $\rho$ of $S$ is $\Ff$-invariant if $\rho_{\alpha}$ is $\Ff_{\alpha}$-invariant.
\end{defn}

Let us denote by $^{\alpha} \Rep(\Ff)$ the set of isomorphism classes of $\alpha$-twisted
representations of $S$ that are $\Ff$-invariant. This set is a monoid under direct sum and
the following is a natural definition.

\begin{defn}
\label{DefTwistedRepresentationRing}
Let  $\alpha$  be  a  $2$-cocycle for the $p$-local finite group $(S,\Ff,\Ll)$.  
The $\alpha$-twisted representation group of $\Ff$ is the Grothendieck group of 
the monoid $^{\alpha} \Rep(\Ff)$. We will denote it by $^{\alpha}R(\Ff)$. 
\end{defn}

The abelian group $^{\alpha}R(\Ff)$ is an $R(\Ff)$-module with the action
\begin{gather*}
R(\Ff) \times {^{\alpha}R(\Ff)} \to {^{\alpha}R(\Ff)} \\
(V,W) \quad \mapsto \quad V \otimes W 
\end{gather*}

Note that representations obtained by untwisting are $A$-representations
in the following sense.

\begin{defn}
We will say that a representation of $S_{\alpha}$ is an $A$-representation if 
$A$ acts by complex multiplication.
\end{defn}

Let $\Rep(S_{\alpha};A)$ be the monoid of isomorphism classes of $A$-representations of $S_{\alpha}$ 
and $R(S_{\alpha};A)$ the corresponding Grothendieck group. By Lemma 1.2 in Chapter 4 of \cite{K94}, 
the process of untwisting determines isomorphisms
\begin{gather*}
^{\alpha} \Rep(S) \cong \Rep(S_{\alpha};A) \\
^{\alpha} R(S) \cong R(S_{\alpha};A) 
\end{gather*}
Let us denote by $\Rep(\Ff_{\alpha};A)$ the monoid of isomorphism classes of $A$-representations of $S_{\alpha}$
that are $\Ff_{\alpha}$-invariant, that is
\[ \Rep(\Ff_{\alpha};A) = \Rep(S_{\alpha};A) \cap \Rep(\Ff_{\alpha}) \]
and let $R(\Ff_{\alpha};A)$ be the corresponding Grothendieck group. This is an $R(\Ff)$-module
with the action
\begin{gather*}
R(\Ff) \times R(\Ff_{\alpha};A) \to R(\Ff_{\alpha};A) \\
(V,W) \quad \mapsto \quad V \otimes W 
\end{gather*}
where $(a,s) v \otimes w = sv \otimes (a,s) w $.

\begin{lem}
\label{UntwistingRepresentations}
Let $\alpha$ be a $2$-cocycle for the $p$-local finite group $(S,\Ff,\Ll)$ and let 
$(S_{\alpha},\Ff_{\alpha},\Ll_{\alpha})$ be the corresponding central extension. 
Then untwisting of $\alpha$-twisted representations of $S$ induces an isomorphism 
of $R(\Ff)$-modules
\[ {^{\alpha} R(\Ff)} \to R(\Ff_{\alpha};A) \]
\end{lem}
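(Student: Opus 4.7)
The plan is to promote the untwisting bijection ${}^{\alpha}\Rep(S) \cong \Rep(S_{\alpha};A)$ of \cite{K94} to an isomorphism at the $\Ff$-invariant level, then to extend it to Grothendieck groups, and finally to verify that it respects the $R(\Ff)$-module structures defined just before the statement.

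The restriction to $\Ff$-invariant classes is essentially tautological from Definition \ref{DefTwistedRepresentation}, which declares $\rho \in {}^{\alpha}\Rep(S)$ to be $\Ff$-invariant \emph{precisely} when $\rho_{\alpha}$ is $\Ff_{\alpha}$-invariant. So the preexisting bijection on all twisted representations sends ${}^{\alpha}\Rep(\Ff)$ bijectively onto $\Rep(\Ff_{\alpha};A)$. This bijection is compatible with direct sums, since $(\rho \oplus \sigma)_{\alpha}(a,s) = a(\rho(s) \oplus \sigma(s)) = \rho_{\alpha}(a,s) \oplus \sigma_{\alpha}(a,s)$, so it upgrades to a monoid isomorphism and hence to a group isomorphism ${}^{\alpha}R(\Ff) \to R(\Ff_{\alpha};A)$.

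It remains to verify $R(\Ff)$-linearity. Let $V \in R(\Ff)$ and $W \in {}^{\alpha}R(\Ff)$, with underlying representations $\rho_V$ and $\rho_W$. The product $\rho_V \otimes \rho_W$ is $\alpha$-twisted because $\rho_V$ is untwisted, and its untwisting at $(a,s)$ is $a\,(\rho_V(s) \otimes \rho_W(s))$. On the target side, the $R(\Ff)$-action on $R(\Ff_{\alpha};A)$ was defined by $(a,s)(v \otimes w) = sv \otimes (a,s)w = \rho_V(s)v \otimes a\rho_W(s)w$, which matches. A subsidiary point is that $V$, pulled back through $S_{\alpha} \twoheadrightarrow S$, must itself be $\Ff_{\alpha}$-invariant in order for the tensor product to live in $\Rep(\Ff_{\alpha};A)$; this follows from the construction in \cite{BCGLO}, which provides a functor $\Ff_{\alpha} \to \Ff$ covering the quotient, along which $\Ff$-invariance clearly pulls back.

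The main obstacle I expect is clerical rather than conceptual: keeping the two module actions straight (only the second tensor factor absorbs the scalar from $A$, because $V$ sees $A$ trivially), and unwinding that $(V \otimes W)_{\alpha}$ agrees with the ad hoc tensor product on $\Rep(\Ff_{\alpha};A)$. Both reduce to the naturality of the untwisting construction, once the correspondence between the two descriptions of the $A$-action is written down carefully.
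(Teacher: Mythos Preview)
Your proposal is correct and follows essentially the same approach as the paper: restrict the untwisting isomorphism ${}^{\alpha}R(S)\cong R(S_{\alpha};A)$ to $\Ff$-invariant elements (tautological by Definition~\ref{DefTwistedRepresentation}), then check $R(\Ff)$-linearity. The paper's proof dismisses the module-structure check as ``straightforward,'' whereas you actually carry it out; your subsidiary remark about pulling back $\Ff$-invariance along the functor $\Ff_{\alpha}\to\Ff$ is a reasonable elaboration but is implicitly absorbed in the paper's definition of the $R(\Ff)$-action on $R(\Ff_{\alpha};A)$.
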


\begin{proof}
Consider the restriction of the untwisting isomorphism $^{\alpha} R(S) \cong R(S_{\alpha};A)$
to ${^{\alpha} R(\Ff)}$. By definition, $\rho$ is an $\Ff$-invariant $\alpha$-twisted representation
if and only if $\rho_{\alpha}$ is $\Ff_{\alpha}$-invariant, hence the restriction ${^{\alpha} R(\Ff)} \to R(\Ff_{\alpha};A)$
is an isomorphism of abelian groups. It is straightforward to check that it is an isomorphism of $R(\Ff)$-modules.
\end{proof}

\begin{cor}
\label{TwistedRepFG}
Let $\alpha$  be a  $2$-cocycle for the $p$-local finite group $(S,\Ff,\Ll)$. Then 
$^{\alpha}R(\Ff)$ is a finitely generated module over the representation ring $R(\Ff)$. 
\end{cor}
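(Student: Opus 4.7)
The plan is to reduce the problem to showing that $R(\Ff_{\alpha};A)$ is finitely generated over $R(\Ff)$, and then to verify this by an even stronger statement: that it is already finitely generated as a $\IZ$-module.

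First I would invoke Lemma \ref{UntwistingRepresentations}, which gives an isomorphism ${}^{\alpha}R(\Ff) \cong R(\Ff_{\alpha};A)$ of $R(\Ff)$-modules. Hence it suffices to prove that $R(\Ff_{\alpha};A)$ is a finitely generated $R(\Ff)$-module. By definition, $R(\Ff_{\alpha};A)$ is a subgroup of $R(S_{\alpha})$, which is the representation ring of the finite group $S_{\alpha}$ and is therefore a free abelian group of finite rank (equal to the number of conjugacy classes of $S_{\alpha}$). Since $\IZ$ is Noetherian, the subgroup $R(\Ff_{\alpha};A)$ is itself a finitely generated abelian group.

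Finally, the $R(\Ff)$-module structure on $R(\Ff_{\alpha};A)$ extends its $\IZ$-module structure via the unit map $\IZ \to R(\Ff)$ (the trivial representation), so any finite set of abelian-group generators is automatically a finite set of $R(\Ff)$-module generators. The argument involves no real obstacle; the only thing to be mildly careful about is that the identification via Lemma \ref{UntwistingRepresentations} is as $R(\Ff)$-modules, so the conclusion transfers back to ${}^{\alpha}R(\Ff)$ as a module over $R(\Ff)$ and not merely as an abelian group.
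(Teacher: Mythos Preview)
Your proposal is correct and follows the same approach as the paper: invoke Lemma \ref{UntwistingRepresentations} to pass to $R(\Ff_{\alpha};A)$, then observe that this is finitely generated as an abelian group (hence over $R(\Ff)$). You have simply spelled out the details of why $R(\Ff_{\alpha};A)$ is a finitely generated abelian group, which the paper takes for granted.
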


\begin{proof}
It follows from the previous lemma and the fact that $R(\Ff_{\alpha};A)$ is finitely generated as
an abelian group. 
\end{proof}

For the convenience of the reader, we include some terminology about
bisets from \cite{BLO}. Recall that an $(S,S)$-biset is a set with a left and a right action
of $S$ which commute with each other. If $P \leq S$ and $\varphi \in \Hom(P,S)$,
then $S \times_{P,\varphi} S$ denotes the $(S,S)$-biset $(S \times S)/{\sim}$
where $(x,gy) \sim (x\varphi(g),y)$ for $x$, $y \in S$ and $g \in P$, with the
obvious left and right action of $S$. If $B$ is an $(S,S)$-biset, then for
$P \leq S$ and $\varphi \in \Hom(P,S)$, we denote by $B_{(P,S)}$ the restriction
of $B$ to a $(P,S)$-biset, and let $B_{(\varphi,S)}$ denote the $(P,S)$-biset
where the left $P$-action is induced by $\varphi$. Given a saturated fusion 
system $\Ff$ over $S$, we say that an $(S,S)$-biset is a characteristic biset
for $\Ff$ if it satisfies the three conditions in Proposition 5.5 of \cite{BLO}.

The following result is mentioned in the proof of Lemma 5.6 (d)
of \cite{BLO} without proof. We include a proof here for completeness.

\begin{lem}
\label{CentralBiset}
Let $\Ff_{\alpha}$ be a saturated fusion system over $S_{\alpha}$ and let $A$ be a subgroup
of $S_{\alpha}$ which is central in $\Ff_{\alpha}$. Then there exists a characteristic biset
$\Lambda$ for $\Ff_{\alpha}$ such that $ax=xa$ for all $x \in \Lambda$ and all $a \in A$.
\end{lem}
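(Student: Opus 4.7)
The plan is to mimic the construction of a characteristic biset from Proposition 5.5 of \cite{BLO}, but using only orbits of the form $S_{\alpha} \times_{(P, \varphi)} S_{\alpha}$ with $A \leq P$. The crucial observation is that on such an orbit the left and right actions of $A$ coincide: for $a \in A$ and a class $[(s, t)]$,
\[
a \cdot [(s, t)] = [(as, t)] = [(sa, t)] = [(s, \varphi(a) t)] = [(s, at)] = [(s, t)] \cdot a,
\]
where we use in turn that $A \leq Z(S_{\alpha})$, that $a \in P$, that $\varphi|_A = \mathrm{id}_A$, and centrality of $A$ in $S_{\alpha}$ once more. The identity $\varphi|_A = \mathrm{id}_A$ holds whenever $\varphi$ is a morphism in $\Ff_{\alpha}$ with $A$ in its source, by the hypothesis $A \leq Z_{\Ff_{\alpha}}(S_{\alpha})$.

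Restricting to subgroups containing $A$ is harmless. Every $\Ff_{\alpha}$-centric subgroup $P \leq S_{\alpha}$ contains $A$, because $A \leq Z(S_{\alpha}) \leq C_{S_{\alpha}}(P) \leq P$. By Alperin's fusion theorem, every morphism in $\Ff_{\alpha}$ factors as a composition of restrictions of automorphisms of $\Ff_{\alpha}$-centric subgroups, so only subgroups containing $A$ need appear in the orbits of a biset realizing the fusion.

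I would then follow the argument of Proposition 5.5 of \cite{BLO} verbatim on this subfamily: for each $\Ff_{\alpha}$-conjugacy class of subgroups containing $A$, pick a fully normalized representative $P$, and include with integer multiplicity $c_P$ the orbit $S_{\alpha} \times_{(Q, \varphi)} S_{\alpha}$ for every $\varphi \in \Hom_{\Ff_{\alpha}}(Q, S_{\alpha})$ with $\varphi(Q) = P$. Solving for the $c_P$ by downward induction on $|P|$, one arranges the restriction condition ${}_{\varphi}\Lambda \cong {}_{\mathrm{incl}}\Lambda$ as $(R, S_{\alpha})$-bisets for every $\varphi \in \Hom_{\Ff_{\alpha}}(R, S_{\alpha})$ with $A \leq R$, while keeping $|\Lambda|/|S_{\alpha}|$ prime to $p$.

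The main obstacle is to check that this inductive system of equations remains solvable once only subgroups containing $A$ are allowed. This amounts to verifying that the combinatorial fixed-point cancellations in BLO's argument invoke only subgroups $P'$ with $A \leq P'$, which is automatic from the Alperin reduction above and the fact that $\varphi(A) = A$ for any $\varphi \in \Hom_{\Ff_{\alpha}}(P, S_{\alpha})$ with $A \leq P$. Given this, every orbit in $\Lambda$ has $A$ in its left stabilizer, so by the first paragraph $ax = xa$ for all $a \in A$ and all $x \in \Lambda$, completing the proof.
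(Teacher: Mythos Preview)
Your approach is sound in spirit but takes a genuinely different and more laborious route than the paper. The paper does not rebuild the characteristic biset from scratch: it starts with \emph{any} characteristic biset $\Omega$ for $\Ff_{\alpha}$ (guaranteed by Proposition~5.5 of \cite{BLO}), lets $A$ act on $\Omega$ by conjugation, and sets $\Lambda = \Omega^A$. A direct computation then shows that each component $S_{\alpha} \times_{(P,\varphi)} S_{\alpha}$ of $\Omega$ is entirely $A$-fixed when $A \leq P$ and has empty $A$-fixed set otherwise, so $\Lambda$ is simply $\Omega$ with the ``bad'' components deleted. The three characteristic-biset axioms for $\Lambda$ then follow almost for free: the decomposition axiom is immediate; the $\Ff_{\alpha}$-stability axiom is checked only on centric subgroups via Alperin (exactly as you suggest), where the existing biset isomorphism for $\Omega$ visibly restricts to $\Lambda$; and the congruence $|\Lambda|/|S_{\alpha}| \equiv |\Omega|/|S_{\alpha}| \pmod p$ holds because every discarded component has $[S_{\alpha}:P]$ divisible by $p$.

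What this buys over your approach is that one never has to re-enter the inductive machinery of \cite{BLO}: the ``main obstacle'' you flag --- that the system of equations for the multiplicities $c_P$ remains solvable when restricted to subgroups containing $A$ --- simply does not arise. Your argument would work, but you would need to actually carry out that verification rather than assert it, and the fixed-point shortcut makes this unnecessary.
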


\begin{proof}
We know there exists a characteristic biset $\Omega$ for $\Ff_{\alpha}$ by Proposition 5.5 of \cite{BLO}.
Consider the conjugation action of $A$ on $\Omega$. Let us see that $\Lambda = \Omega^A$
is a characteristic biset for $\Ff_{\alpha}$ with the desired property. First of all, $\Lambda$ inherits 
left and right actions of $S_{\alpha}$ from $\Omega$ because $A$ is central in $S_{\alpha}$. And if 
$x \in \Lambda$ and $a \in A$, then
\[ a x = a a^{-1} x a = x a \]
so it satisfies the desired property. It remains to show that it satisfies the three conditions
of a characteristic biset from Proposition 5.5 of \cite{BLO}. \newline

\noindent (1) The biset $\Lambda$ is a disjoint union of bisets of the form $S_{\alpha} \times_{(P,\varphi)} S_{\alpha}$
with $P \leq S_{\alpha}$ and $\varphi \colon P \to S_{\alpha}$ in $\Ff_{\alpha}$. \newline

If $[x,y]$ belongs to one the components $S_{\alpha} \times_{(P,\varphi)} S_{\alpha}$
in the decomposition of $\Omega$ and $A$ is contained in $P$, then
\[ a [x,y] a^{-1} = [ax,ya^{-1}] = [xa,a^{-1}y] = [xa\varphi(a)^{-1},y] = [xaa^{-1},y] = [x,y] \]
and therefore $S_{\alpha} \times_{(P,\varphi)} S_{\alpha}$ is contained in $\Lambda$. If $A$ is not contained in $P$, 
let $ [x,y] \in (S_{\alpha} \times_{(P,\varphi)} S_{\alpha})^A $ and $a \in A - P$. Then
\[ [ax,ya^{-1}] = [x,y] \]
from where $a \in P$. This is a contradiction and so $(S_{\alpha} \times_{(P,\varphi)} S_{\alpha})^A $ is empty. We conclude
that the biset $\Lambda$ is the disjoint union of the bisets of the form $S_{\alpha} \times_{(P,\varphi)} S_{\alpha}$
that appear in the decomposition of $\Omega$ and that satisfy $ A \subseteq P$. \newline

\noindent (2) For each $P \leq S_{\alpha}$ and each $\varphi \in \Hom_{\Ff_{\alpha}}(P,S_{\alpha})$, the $(P,S_{\alpha})$-bisets $\Lambda_{(P,S_{\alpha})}$
and $\Lambda_{(\varphi,S_{\alpha})}$ are isomorphic. \newline 

By Alperin's fusion lemma, it suffices to prove that $\Lambda_{(Q,S_{\alpha})}$ and $\Lambda_{(\psi,S_{\alpha})}$
are isomorphic $(Q,S_{\alpha})$-bisets for any $\Ff_{\alpha}$-centric subgroup $Q$ and any $\psi \colon Q \to S_{\alpha}$
in $\Ff_{\alpha}$. Since $\Omega_{(Q,S_{\alpha})}$ and $\Omega_{(\psi,S_{\alpha})}$ are isomorphic $(Q,S_{\alpha})$-bisets, there exists a $(Q,S_{\alpha})$-equivariant
bijection $ \theta \colon \Omega_{(Q,S_{\alpha})} \to \Omega_{(\psi,S_{\alpha})}$. Given $x \in \Lambda$ and $a \in A$, we
have
\[ a \theta(x) a^{-1} = \theta(\psi(a)xa^{-1}) = \theta(axa^{-1}) = \theta(x) \]
where the first equality holds because $Q$ is $\Ff_{\alpha}$-centric and so it contains $A$. Hence $\theta$
restricts to a $(Q,S_{\alpha})$-equivariant bijection $\Lambda_{(Q,S_{\alpha})} \to \Lambda_{(\psi,S_{\alpha})}$. \newline

\noindent (3) The number $|\Lambda| / |S_{\alpha}| $ is congruent to $1$ mod $p$. \newline

By the first part, we know we can obtain $\Lambda$ from $\Omega$ by removing the summands $S_{\alpha} \times_{(P,\varphi)} S_{\alpha}$ where
$P$ does not contain $A$. 
\[ |\Lambda| / |S_{\alpha}| = |\Lambda/S_{\alpha}| = |\Omega/S_{\alpha}| - \sum_{A \nleq P} |S_{\alpha} \times_{(P,\varphi)} S_{\alpha}| \]
Note that $|S_{\alpha} \times_{(P,\varphi)} S_{\alpha}| = |S_{\alpha}/P|$ and all the summands that
we are removing satisfy $ P \neq S_{\alpha}$ since $S_{\alpha}$ contains $A$. Therefore $|\Lambda| / |S_{\alpha}| 
\equiv |\Omega|/|S_{\alpha}| \mod p$, thus congruent to $1$ mod $p$.
\end{proof}

\begin{thm}
\label{InductionBReps}
Let $\rho$ be an $A$-representation of $S_{\alpha}$. Then there exists an $A$-representation of $S_{\alpha}$ which
is $\Ff_{\alpha}$-invariant and that contains $\rho$ as a direct summand.
\end{thm}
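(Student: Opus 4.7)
The plan is to mimic the standard construction of $\Ff$-invariant objects via a characteristic biset, using the special biset $\Lambda$ produced by \lemref{CentralBiset} which commutes elementwise with $A$. Given the $A$-representation $\rho \colon S_{\alpha} \to U(V)$, I would form the induced representation
\[ W = \IC[\Lambda] \otimes_{\IC[S_{\alpha}]} V, \]
where $S_{\alpha}$ acts on $\Lambda$ on the right (used to form the tensor product) and on the left (giving $W$ its $S_{\alpha}$-action). I would then show that (a) $W$ is an $A$-representation, (b) $W$ is $\Ff_{\alpha}$-invariant, and (c) $V$ is a direct summand of $W$.

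For (a), observe that for $a \in A$, $x \in \Lambda$, and $v \in V$, the commutation property $ax = xa$ from \lemref{CentralBiset} gives
\[ a \cdot (x \otimes v) = (ax) \otimes v = (xa) \otimes v = x \otimes (a v) = a(x \otimes v), \]
where in the last step we use that $A$ acts on $V$ by complex multiplication and tensor products are $\IC$-linear. For (b), property (2) from \lemref{CentralBiset} gives an isomorphism of $(P, S_{\alpha})$-bisets $\Lambda_{(P,S_{\alpha})} \cong \Lambda_{(\varphi, S_{\alpha})}$ for any $P \leq S_{\alpha}$ and any $\varphi \in \Hom_{\Ff_{\alpha}}(P, S_{\alpha})$; tensoring with $V$ over $\IC[S_{\alpha}]$ yields an isomorphism $W_{|P} \cong W_{|\varphi(P)} \circ \varphi$, as required by \defn{}~\ref{defrepresentationfusionsystem}.

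For (c), I would use the standard decomposition $\Lambda = \coprod_i (S_{\alpha} \times_{(P_i, \varphi_i)} S_{\alpha})$ from property (1) to obtain
\[ W \cong \bigoplus_i \Ind_{P_i}^{S_{\alpha}} (V \circ \varphi_i). \]
Let $k$ denote the number of indices $i$ for which $(P_i, \varphi_i) = (S_{\alpha}, \mathrm{id})$; each such summand contributes a copy of $V$ to $W$. Since $S_{\alpha}$ is a $p$-group, $[S_{\alpha} : P_i]$ is divisible by $p$ whenever $P_i \neq S_{\alpha}$, so
\[ |\Lambda|/|S_{\alpha}| = k + \sum_{P_i \neq S_{\alpha}} [S_{\alpha} : P_i] \equiv k \pmod{p}. \]
Combined with property (3), this forces $k \equiv 1 \pmod p$, and in particular $k \geq 1$, so $V$ appears as a direct summand of $W$.

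The main obstacle I anticipate is bookkeeping: writing the tensor product over $\IC[S_{\alpha}]$ in a way that cleanly identifies the induced-representation summands, and being careful that the left $S_{\alpha}$-action on $W$ (which gives it the structure of a representation) is the one inherited from the left action on $\Lambda$, while the tensor product uses the right action. Once this is set up, each of the three required properties is a direct consequence of the corresponding property of $\Lambda$ in \lemref{CentralBiset}.
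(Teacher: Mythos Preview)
Your approach is exactly the paper's: form $W=\IC[\Lambda]\otimes_{\IC[S_\alpha]}V$ using the biset $\Lambda$ of Lemma~\ref{CentralBiset}, and verify the three properties. The paper does not spell out (b) and (c) but simply cites Proposition~3.8 of \cite{CCM}; your arguments for (a) and (b) match the paper's line for line.

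One small imprecision in your step (c): the displayed equation
\[
|\Lambda|/|S_\alpha| \;=\; k + \sum_{P_i\neq S_\alpha}[S_\alpha:P_i]
\]
tacitly assumes that every component with $P_i=S_\alpha$ has $\varphi_i=\mathrm{id}$. A general characteristic biset may contain components $S_\alpha\times_{(S_\alpha,\varphi)}S_\alpha$ with $\varphi\in\Aut_{\Ff_\alpha}(S_\alpha)$ nontrivial, and these contribute $\varphi^*V$ rather than $V$ to $W$ while still having index~$1$. So property~(3) only yields $k+m\equiv 1\pmod p$, where $m$ counts such non-identity components, and this does not directly force $k\geq 1$. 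The fix is painless: adjoin $p$ copies of the identity biset $S_\alpha\times_{(S_\alpha,\mathrm{id})}S_\alpha$ to $\Lambda$; this preserves properties (1)--(3) of Lemma~\ref{CentralBiset} as well as the commutation $ax=xa$, and guarantees an identity component. With that adjustment your argument for (c) goes through.
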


\begin{proof}
Since $A$ is central in $\Ff_{\alpha}$, by Lemma \ref{CentralBiset} there exists
a characteristic biset $\Lambda$ for $\Ff_{\alpha}$ such that $ax=xa$ for all $a \in S$
and all $x \in \Lambda$. Following Proposition 3.8 in \cite{CCM}, if $\rho$ is an $n$-dimensional 
representation, we consider the action of $S_{\alpha}$ on $ \IC[\Lambda] \otimes_{S_{\alpha}} \IC^n$ 
induced by the left action of $S_{\alpha}$ on $\Lambda$. Since $\Lambda$ is characteristic, the 
same argument goes through to show that this representation is $\Ff_{\alpha}$-invariant and contains $\rho$ as a direct
summand. Moreover, $A$ acts by complex multiplication on this new representation since
\[ a (x \otimes v) = ax \otimes v = xa \otimes v = x \otimes \rho(a)(v) \]
and $\rho$ is an $A$-representation.
\end{proof}

We now prove an analogue of Proposition 5.7 in \cite{CCM} for $A$-representations.

\begin{prop}
\label{StableElementsAReps}
There is an isomorphism $ R(\Ff_{\alpha};A) \cong \higherlim{\Or(\Ff_{\alpha}^c)}{} R(Q;A) $
of modules over $R(\Ff)$.
\end{prop}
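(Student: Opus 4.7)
The plan is to follow the proof of Proposition 5.7 in \cite{CCM} almost verbatim, substituting Theorem \ref{InductionBReps} and Lemma \ref{CentralBiset} for their untwisted analogues in order to track the $A$-action throughout. The natural candidate map $\Phi \colon R(\Ff_{\alpha};A) \to \higherlim{\Or(\Ff_{\alpha}^c)}{} R(Q;A)$ sends an $\Ff_{\alpha}$-invariant $A$-representation $\rho$ of $S_{\alpha}$ to the compatible family $(\rho|_Q)_Q$ of its restrictions. This is well-defined: since $A$ is central in $\Ff_{\alpha}$, every $\Ff_{\alpha}$-centric subgroup $Q$ contains $A$, so $\rho|_Q$ really is an $A$-representation of $Q$, and every morphism between $\Ff_{\alpha}$-centric subgroups fixes $A$ pointwise, so pullback preserves the scalar $A$-action. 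The map $\Phi$ is $R(\Ff)$-linear because both actions are by tensor product, which commutes with restriction.

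For injectivity I would observe that $S_{\alpha}$ is itself $\Ff_{\alpha}$-centric, so the projection from the limit to the $S_{\alpha}$-factor of $\Phi$ agrees with the tautological inclusion $R(\Ff_{\alpha};A) \hookrightarrow R(S_{\alpha};A)$, which is injective by definition. For surjectivity, given a stable element $(\rho_Q)_Q$, I would take $\rho = \rho_{S_{\alpha}}$ and use the compatibility conditions: for every morphism $f \colon P \to S_{\alpha}$ in $\Ff_{\alpha}^c$ we get $\rho|_P \cong \rho|_{f(P)} \circ f$ as virtual $A$-representations of $P$, and Alperin's fusion theorem (Theorem A.10 in \cite{BLO}) extends this to all morphisms in $\Ff_{\alpha}$. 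Thus $\rho$ is $\Ff_{\alpha}$-invariant as a virtual $A$-representation, so it determines a class in $R(\Ff_{\alpha};A)$ that maps under $\Phi$ to the given stable element.

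The subtlety, already present in the untwisted case, is to work at the level of actual representations rather than virtual ones when needed. Here I would invoke the characteristic biset $\Lambda$ from Lemma \ref{CentralBiset} together with Theorem \ref{InductionBReps}: any $A$-representation of $S_{\alpha}$ embeds as a direct summand into an $\Ff_{\alpha}$-invariant $A$-representation via $\IC[\Lambda] \otimes_{S_{\alpha}} (-)$, and this induction preserves the $A$-action precisely because $\Lambda$ commutes with $A$. I expect this bookkeeping to be the only real obstacle, and it is exactly the reason why Lemma \ref{CentralBiset} and Theorem \ref{InductionBReps} were established beforehand. Once these compatibilities are in place, the splitting-into-isotypic-components argument from \cite{CCM} transfers without further modification, and the resulting isomorphism is manifestly $R(\Ff)$-linear.
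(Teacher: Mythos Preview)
Your proposal is correct and follows essentially the same route as the paper: the map is injective because projection to the $S_{\alpha}$-factor is the tautological inclusion, and surjectivity is handled by embedding $\rho_{S_{\alpha}}$ into an $\Ff_{\alpha}$-invariant $A$-representation via Theorem~\ref{InductionBReps} and observing that the complementary summand added to $\beta_{S_{\alpha}}$ is then forced to be $\Ff_{\alpha}$-invariant as well. The paper's proof is terser and does not invoke Alperin's fusion theorem explicitly (it relies instead on the stable-elements description of $R(\Ff_{\alpha})$ already established), but the content is the same.
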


\begin{proof}
The obvious map $ R(\Ff_{\alpha};A) \to \higherlim{\Or(\Ff_{\alpha}^c)}{} R(Q;A) $ is clearly 
a monomorphism of $R(\Ff)$-modules. To see that it is surjective it suffices to show that given 
an element $([\rho_Q] - [\beta_Q])_Q$ in the limit, there exists $[\rho] - [\beta]$ in $R(\Ff_{\alpha};A)$ such that 
\[ [\rho] - [\beta] = [\rho_S] - [\beta_S] \]
By Lemma \ref{InductionBReps}, there is a $\Ff_{\alpha}$-invariant $A$-representation $\rho_S^{\Ff_{\alpha}}$ such that
\[ \rho_S^{\Ff_{\alpha}} = \rho_S \oplus \rho' \]
Therefore
\[ [\rho_S] - [\beta_S] = [\rho_S^{\Ff_{\alpha}}] - [\beta_S \oplus \rho'] \]
and note that $\beta_S \oplus \rho'$ is $\Ff_{\alpha}$-invariant because $\rho_S^{\Ff_{\alpha}}$ and the
difference are both $\Ff_{\alpha}$-invariant. 
\end{proof}

We end this section with the following result, which will be useful in
the proof of Theorem \ref{TwistedCompletion}.

\begin{lem}
\label{SameTopologies}
Let $P$ be an $\Ff_{\alpha}$-centric subgroup of $S_{\alpha}$. The $I(\Ff)$-adic and $I(P/A)$-adic 
topologies coincide for $R(P;A)$.
\end{lem}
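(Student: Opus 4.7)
The plan is to reduce the statement to Corollary~\ref{TopologiesCoincide} applied to $P/A$ viewed as a subgroup of $S = S_\alpha/A$. First, since $A \leq Z_{\Ff_\alpha}(S_\alpha) \leq Z(S_\alpha)$ (the center of a fusion system is always contained in the center of its Sylow group), we have $A \leq C_{S_\alpha}(P)$, and the $\Ff_\alpha$-centricity of $P$ forces $A \leq P$. Hence $P/A$ is a well-defined subgroup of $S$. I would next observe that the $R(\Ff)$-action on $R_A(P)$ factors through $R(P/A)$: given $V \in R(\Ff)$, one restricts $V$ to $P/A$, inflates along the projection $P \twoheadrightarrow P/A$ to obtain a representation of $P$ on which $A$ acts trivially, and tensors with an element of $R_A(P)$; the resulting representation of $P$ still has $A$ acting by scalars. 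Thus the $R(\Ff)$-module structure on $R_A(P)$ is the one induced from the ring homomorphism $R(\Ff) \to R(S) \to R(P/A)$ together with the natural $R(P/A)$-module structure on $R_A(P)$.

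One inclusion of topologies is then immediate. Since the augmentation $R(\Ff) \to \IZ$ factors through $R(P/A) \to \IZ$, the image $J$ of $I(\Ff)$ in $R(P/A)$ is contained in $I(P/A)$, and therefore $I(\Ff)^n R_A(P) = J^n R_A(P) \subseteq I(P/A)^n R_A(P)$ for every $n$. For the opposite inclusion I would apply Corollary~\ref{TopologiesCoincide} to the subgroup $P/A \leq S$: the $I(P/A)$-adic topology on $R(P/A)$ agrees with its $I(\Ff)$-adic topology, so for each $n$ there exists $m$ with $I(P/A)^m \subseteq I(\Ff)^n R(P/A) = J^n R(P/A)$ inside $R(P/A)$. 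Multiplying by $R_A(P)$ yields $I(P/A)^m R_A(P) \subseteq J^n R_A(P) = I(\Ff)^n R_A(P)$, and the two topologies coincide.

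The only delicate point is the bookkeeping in the first paragraph, namely making sure that the $R(\Ff)$-module structure on $R_A(P)$ relevant for the forthcoming twisted completion theorem is genuinely the one obtained via restriction-inflation through $R(P/A)$ (rather than, say, via restriction from $R(\Ff_\alpha;A)$ along $S_\alpha \to P$ directly). Once this is in place, the lemma is a purely formal consequence of the untwisted comparison of topologies already established in Section~\ref{section:ideals}, transported to the finitely generated $R(P/A)$-module $R_A(P)$.
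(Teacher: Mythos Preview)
Your proposal is correct and follows essentially the same approach as the paper: the paper's proof is the terse two-sentence version of what you wrote, observing that the $R(\Ff)$-action on $R_A(P)$ factors through $R(P/A)$ and then invoking Corollary~\ref{TopologiesCoincide}. Your additional remarks—that $A\leq P$ because $P$ is $\Ff_\alpha$-centric, and the explicit transport of the topology comparison from $R(P/A)$ to the $R(P/A)$-module $R_A(P)$—simply unpack the paper's ``and therefore for $R_A(P)$''.
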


\begin{proof}
The action of $R(\Ff)$ on $R(P;A)$ factors through the
restriction from $R(\Ff)$ to $R(P/A)$. By Corollary \ref{TopologiesCoincide},
the $I(P/A)$-adic and $I(\Ff)$-adic topologies coincide for $R(P/A)$
and therefore for $R(P;A)$.
\end{proof}

\section{The twisted completion theorem}
\label{section:twistedcompletion}

In this section we prove the general completion theorem for $p$-local finite groups. 
First we determine that the twisted $K$-theory of the classifying space can be computed
by stable elements in a certain sense and use Lahtinen's completion
maps in twisted $K$-theory \cite{L} to induce a completion map for the group 
of twisted representations of a fusion system. Since this group is also computed
by stable elements, we can use the same argument as in Theorem \ref{completiontheorem}.

Throughout this section we let $A$ be a cyclic $p$-group of order $p^n$ and let
$\beta_n \colon H^2(B;A) \to H^3(B;\IZ)$ be the connecting morphism induced by 
the short exact sequence of coefficients $ 0 \to \IZ \stackrel{p^n}{\to} \IZ \to A \to 0 $ as
in Section \ref{section:twistedrepresentations}.

We begin by giving an alternative description of twisted $K$-theory for twistings in the
image of $\beta_n$. The inclusion of $A$ in $S^1$ induces a homomorphism
$BA = U(\Hi)/A \to PU(\Hi)$ and therefore $BA$ acts on $\Fred'(\Hi)$. Let $ \pi \colon E \to B$
be a principal $BA$-bundle. We define $ K(E;\pi)$ to be the set of equivariant homotopy classes of $BA$-equivariant maps
$ \phi \colon E \to \Fred'(\Hi)$, that is, $ \phi ( e \cdot g) = g^{-1} \phi(e) $ for all $g \in BA$.

\begin{lem}
\label{UntwistingKTheory}
Let $ \pi \colon E \to B $ be a principal $BA$-bundle classified by $\alpha \in H^2(B;A)$. 
Then there is a bijection
\[ K(E;\pi) \cong {}^{\beta_n(\alpha)} K(B)  \]
\end{lem}

\begin{proof}
Let $P \to B$ be the projective bundle classified by $\beta_n(\alpha)$. Recall that ${}^{\beta_n(\alpha)} K(B)$ is 
the set of homotopy classes of sections of the associated bundle $ P \times_{PU(\Hi)} \Fred'(\Hi) \to B$.
It is straightforward to check that Theorem 8.1 in Chapter 4 of \cite{Hu} is still valid for homotopy classes
of maps, hence this set is in bijective correspondence with the set of equivariant homotopy classes of 
$PU(\Hi)$-equivariant maps $ \phi \colon P \to \Fred'(\Hi)$, in the sense that $\phi(eg)=g^{-1}\phi(e)$.

Since $\beta_n(\alpha)$ is the composition of the classifying map $ \alpha \colon B \to BBA$ for $\pi$ with the map induced
by the inclusion of $A$ in $S^1$, the principal $PU(\Hi)$-bundle $ P \to B$ is isomorphic to 
$ E \times_{BA} {PU(\Hi)} \to B$. Therefore the set of equivariant homotopy classes of $PU(\Hi)$-equivariant
maps $ P \to \Fred'(\Hi)$ is in bijection with the set of equivariant homotopy classes of $BA$-equivariant
maps $ E \to \Fred'(\Hi)$.
\end{proof}

The group structure of ${}^{\alpha} K(B)$ is given by fiberwise composition in $\Fred'(\Hi)$. The
corresponding operation on $K(E;\pi)$ is therefore also induced by composition in $\Fred'(\Hi)$ and
the bijection in this lemma becomes an isomorphism of groups with this structure.

\begin{cor}
\label{UntwistingKTheoryPLocal}
Let $\alpha$ be a $2$-cocycle for the $p$-local finite group $(S,\Ff,\Ll)$ and let
$ \pi \colon |\Ll_{\alpha}| \pcom \to |\Ll| \pcom $ be the corresponding principal
$BA$-bundle. There is an isomorphism 
\[ K(|\Ll_{\alpha}| \pcom;\pi) \cong {^{\beta_n(\alpha)} K(|\Ll| \pcom)} \]
\end{cor}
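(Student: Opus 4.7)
The strategy is to reduce the statement directly to Lemma \ref{UntwistingKTheory} applied to the principal $BA$-bundle $\pi \colon |\Ll_{\alpha}|\pcom \to |\Ll|\pcom$. Thus I only need to identify the composite classifying map $|\Ll|\pcom \to BBA \to BBS^1 = K(\IZ,3)$ of $\pi$ with the integral $3$-cohomology class represented by the $2$-cocycle $\alpha$, and then invoke the lemma.

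First I would observe, using Theorem 6.13 of \cite{BCGLO} as recalled in Section \ref{section:preliminaries}, that the central extension $(S_{\alpha},\Ff_{\alpha},\Ll_{\alpha})$ corresponds under the bijection
\[ H^2(|\Ll|\pcom;A) \longleftrightarrow \{\text{principal fibrations } BA \to X \to |\Ll|\pcom\} \]
to the class $[\alpha] \in H^2(|\Ll|\pcom;A)$ which is the image of the $2$-cocycle $\alpha$ (viewed on the quasicentric linking system via Lemma 6.12 of \cite{BCGLO}). In particular, the classifying map $c \colon |\Ll|\pcom \to BBA$ of $\pi$ represents $[\alpha] \in H^2(|\Ll|\pcom;A) = [|\Ll|\pcom, BBA]$.

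Next I would identify the composite $BBA \to BBS^1$, induced by the inclusion $A \hookrightarrow S^1$, as the map that realizes the homomorphism $H^2(-;A) \to H^3(-;\IZ)$ we already used to produce a class in $H^3(|\Ll|\pcom;\IZ)$ from $\alpha$ in Section \ref{section:twistedrepresentations}. Composing $c$ with this map therefore gives a classifying map $|\Ll|\pcom \to K(\IZ,3)$ representing the integral class $[\alpha] \in H^3(|\Ll|\pcom;\IZ)$. Applying Lemma \ref{UntwistingKTheory} to $\pi$ with this composition in the role of the map called $\alpha$ there, we obtain the asserted isomorphism $K(|\Ll_{\alpha}|\pcom;\pi) \cong {}^{\alpha} K(|\Ll|\pcom)$, and the remark after Lemma \ref{UntwistingKTheory} ensures this is an isomorphism of groups (with operation coming from fiberwise composition in $\Fred'(\Hi)$).

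The only real obstacle is the bookkeeping in the first two steps, namely checking that the two ways of turning $\alpha$ into a class in $H^3(|\Ll|\pcom;\IZ)$ — directly via the Bockstein-type map $H^2(-;A) \to H^3(-;\IZ)$, and via the composite $|\Ll|\pcom \xrightarrow{c} BBA \to BBS^1$ classifying the $PU(\Hi)$-bundle obtained from $\pi$ by extending the structure group along $A \hookrightarrow S^1$ — agree. This is essentially formal from the definition of classifying maps and the functoriality of $B$, so no substantial new argument is needed.
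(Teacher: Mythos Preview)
Your proposal is correct and follows exactly the approach the paper intends: the paper states this as an immediate corollary of Lemma~\ref{UntwistingKTheory} with no further argument, so applying that lemma to the bundle $\pi$ and identifying the composite $|\Ll|\pcom \to BBA \to BBS^1$ with the class $[\alpha]\in H^3(|\Ll|\pcom;\IZ)$ is precisely what is meant. Your bookkeeping about the agreement of the two descriptions of the twisting class is more explicit than anything the paper writes down, but it is the right (and essentially formal) point to check.
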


\begin{rem}
Note that a commutative diagram
\[
\diagram
E' \rto^q \dto_{\pi'} & E \dto^{\pi} \\
B' \rto & B 
\enddiagram
\]
where $\pi'$ and $\pi$ are principal $BA$-bundles and $q$ is $BA$-equivariant, induces a homomorphism
$ q^* \colon K(E;\pi) \to K(E';\pi')$. This assignment is clearly functorial in the category of such maps. More generally,
given principal $BA$-bundles $\pi' \colon E' \to B' $ and $\pi \colon E \to B$ that fit into a commutative
diagram of spectra
\[
\diagram
\Sigma^{\infty} E' \rto^q \dto_{\Sigma^{\infty} \pi'} & \Sigma^{\infty} E \dto^{\Sigma^{\infty} \pi} \\
\Sigma^{\infty} B' \rto   & \Sigma^{\infty} B
\enddiagram
\]
where $q$ is $BA$-equivariant, we have an induced map $ q^* \colon K(E;\pi) \to K(E';\pi')$ because equivariant
homotopy classes of $BA$-equivariant maps $E \to \Fred'(\Hi)$ are in bijective correspondence with equivariant
homotopy classes of $BA$-equivariant stable maps $\Sigma^{\infty} E \to \Sigma^{\infty} \Fred'(\Hi)$. This assignment
is also functorial.
\end{rem}

\begin{rem}
By Proposition 4.9 of \cite{Rag}, the characteristic biset $\Lambda$ for $\Ff_{\alpha}$ from Lemma 
\ref{CentralBiset} determines a characteristic idempotent in $A(S_{\alpha},S_{\alpha}) \pcom$
and the stable summand determined by the corresponding idempotent $\omega$ in 
$\{ \Sigma^{\infty} BS_{\alpha}, \Sigma^{\infty} BS_{\alpha} \}$ is $\Sigma^{\infty} |\Ll_{\alpha}| \pcom$.

Note that the $(S_{\alpha},S_{\alpha})$-biset $\Lambda$ constructed in the proof of Lemma \ref{CentralBiset} is a disjoint union of $(S_{\alpha},S_{\alpha})$-bisets of the form $S_{\alpha} \times_{(P,\varphi)} S_{\alpha}$ where $P$ contains $A$. Let $S = S_{\alpha}/A$.
We can construct an $(S,S)$-biset $\Lambda/A$ by including a copy of $S \times_{(P/A,\varphi/A)} S$ for each copy
of $S_{\alpha} \times_{(P,\varphi)} S_{\alpha}$ in $\Lambda$. Now each $(S_{\alpha},S_{\alpha})$-biset $S_{\alpha} \times_{(P,\varphi)} S_{\alpha}$ in $\Lambda$ determines a commutative diagram of stable maps
\[ \xymatrixcolsep{5pc}
\diagram
\Sigma^{\infty} BS_{\alpha} \rto^{\tr_P^{S_{\alpha}}} \dto_{\Sigma^{\infty} Bq} & \Sigma^{\infty} BP \rto^{\Sigma^{\infty} B\varphi} \dto^{\Sigma^{\infty} Bq_{|BP}} & \Sigma^{\infty} BS_{\alpha} \dto^{\Sigma^{\infty} Bq} \\ 
\Sigma^{\infty} BS \rto_{\tr_{P/A}^S} & \Sigma^{\infty} B(P/A) \rto_{\Sigma^{\infty} B(\varphi/A) } & \Sigma^{\infty} BS
\enddiagram
\]
where $q \colon S_{\alpha} \to S$ is the quotient homomorphism and $\tr$ denotes the stable transfer map. These two
bisets determine stable maps $\omega$ and $\omega/A$, given by the sum of these stable maps above, each summand 
corresponding to a component of the biset. By the commutativity of the diagrams above, these stable maps fit into a
commutative diagram
\[
\diagram
\Sigma^{\infty} BS_{\alpha} \rto^{\omega} \dto_{\Sigma^{\infty} Bq} & \Sigma^{\infty} BS_{\alpha} \dto^{\Sigma^{\infty} Bq} \\ 
\Sigma^{\infty} BS \rto_{\omega/A} & \Sigma^{\infty} BS
\enddiagram
\]
\end{rem}
 
\begin{thm}
\label{StableElementsKATheory}
Let $\alpha$ be a $2$-cocycle for the $p$-local finite group $(S,\Ff,\Ll)$ and 
$ \pi \colon |\Ll_{\alpha}| \pcom \to |\Ll| \pcom $ the corresponding principal
$BA$-bundle. There is an isomorphism of abelian groups
\[ K(|\Ll_{\alpha}| \pcom;\pi) \cong \higherlim{\Or(\Ff_{\alpha}^c)}{} K(BQ;\pi_Q) \]
where $\pi_Q \colon BQ \to B(Q/A)$ is the pullback of $\pi$ under the composition of
the map induced by the inclusion of $Q/A$ in $S$ and the standard map $BS \to |\Ll| \pcom$.
\end{thm}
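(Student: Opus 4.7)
The plan is to adapt the stable-elements proof of Theorem 4.2 of \cite{CCM} to the $BA$-equivariant setting imposed by the twisting $\pi$. The crucial technical input is Lemma \ref{CentralBiset}: it guarantees that the characteristic biset $\Lambda$ for $(S_\alpha,\Ff_\alpha,\Ll_\alpha)$ can be chosen $A$-equivariant, so the induced stable idempotent $\omega$ on $\Sigma^\infty BS_\alpha$ is compatible with the principal $BA$-bundle projection $\Sigma^\infty Bq \colon \Sigma^\infty BS_\alpha \to \Sigma^\infty BS$ and the characteristic idempotent $\omega/A$ for $(S,\Ff,\Ll)$, as exhibited in the second remark preceding the theorem.

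First I construct the comparison map. Since $A$ is central in $\Ff_\alpha$, every morphism $\varphi \in \Hom_{\Ff_\alpha}(Q,Q')$ restricts to the identity on $A$, so $B\varphi$ is $BA$-equivariant over $B(\varphi/A)$ and induces $K(BQ';\pi_{Q'}) \to K(BQ;\pi_Q)$ by the first remark preceding the theorem; this depends only on the class of $\varphi$ modulo $\Inn(Q')$. The inclusion $BQ \hookrightarrow |\Ll_\alpha|\pcom$ is $BA$-equivariant over $B(Q/A) \to |\Ll|\pcom$, and the resulting restriction maps assemble into a natural transformation, yielding
\[ \Phi \colon K(|\Ll_\alpha|\pcom;\pi) \to \higherlim{\Or(\Ff_\alpha^c)}{} K(BQ;\pi_Q). \]

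Next I use the characteristic idempotent to show $\Phi$ is an isomorphism. By Lemma \ref{UntwistingKTheory} the functor $K(-;\pi_{(-)})$ is representable by $BA$-equivariant stable maps to $\Fred'(\Hi)$, and by the second remark preceding the theorem the idempotent $\omega$ lives in the $BA$-equivariant stable category; hence the stable splitting cut out by $\omega$ identifies $K(|\Ll_\alpha|\pcom;\pi)$ with the image of $\omega^*$ acting on $K(BS_\alpha;\pi_S)$. Writing $\omega$ as a formal sum over its biset components $S_\alpha \times_{(P_i,\varphi_i)} S_\alpha$, all of which satisfy $A \leq P_i$, the operator $\omega^*$ becomes a linear combination of compositions of $(B\varphi_i)^*$ with a stable transfer $\tr^{S_\alpha}_{P_i}$. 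For any compatible family $(x_Q)_Q$ in the limit the stability identity $(B\varphi_i)^*(x_{S_\alpha}) = \res^{S_\alpha}_{P_i}(x_{S_\alpha})$ together with the normalization $|\Lambda|/|S_\alpha| \equiv 1 \pmod p$ from Lemma \ref{CentralBiset} force $\omega^* x_{S_\alpha} = x_{S_\alpha}$ after $p$-completion, so $\omega^* x_{S_\alpha}$ is a preimage of $(x_Q)_Q$ under $\Phi$, giving surjectivity. Injectivity is immediate, since restriction to $BS_\alpha$ followed by $\omega^*$ is the identity on the summand $K(|\Ll_\alpha|\pcom;\pi)$.

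The principal obstacle is arranging things so that the entire classical argument — idempotent splitting, stable transfers, and the Alperin-style reduction to $\Ff_\alpha$-centric subgroups — operates inside the $BA$-equivariant stable homotopy category where the twisted $K$-theory functor is defined. Lemma \ref{CentralBiset} and the compatibility diagrams recalled in the two remarks preceding the theorem supply exactly the $BA$-equivariant lifts needed; once this verification is in place, the argument is formally parallel to that of Theorem 4.2 of \cite{CCM}.
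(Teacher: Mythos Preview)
Your proposal is correct and follows essentially the same approach as the paper: both argue that the proof of Theorem~4.2 of \cite{CCM} goes through verbatim once the characteristic idempotent $\omega$ is chosen $BA$-equivariantly via Lemma~\ref{CentralBiset}, with the two remarks preceding the theorem supplying the necessary functoriality and compatibility diagrams. The paper's proof is a single sentence to this effect; you have simply unpacked the construction of the comparison map and the idempotent-splitting argument in more detail.
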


\begin{proof}
By the two previous remarks, one just needs to follow the proof of Theorem 4.2 of \cite{CCM} 
with the stable idempotent $\omega$ described above.
\end{proof}

\begin{thm}
\label{TwistedCompletion}
Given a $2$-cocycle $\alpha$ for the $p$-local finite group $(S,\Ff,\Ll)$,
the completion of $^{\alpha} R(\Ff)$ with respect to $I(\Ff)$ is isomorphic to 
$^{\beta_n(\alpha)} K(|\Ll| \pcom)$ as a module over $K(|\Ll| \pcom)$.
\end{thm}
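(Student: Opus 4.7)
The plan is to reduce the twisted case to an argument that mirrors the proof of Theorem \ref{completiontheorem}, using the two untwisting identifications established in the previous section. By Lemma \ref{UntwistingRepresentations} we have an isomorphism of $R(\Ff)$-modules ${^{\alpha}R(\Ff)} \cong R(\Ff_{\alpha};A)$, and by Corollary \ref{UntwistingKTheoryPLocal} we have ${^{\alpha}K(|\Ll|\pcom)} \cong K(|\Ll_{\alpha}|\pcom;\pi)$. So it suffices to produce an isomorphism $R(\Ff_{\alpha};A)^{\wedge}_{I(\Ff)} \cong K(|\Ll_{\alpha}|\pcom;\pi)$ of $K(|\Ll|\pcom)$-modules. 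The left side is computed by stable elements over $\Or(\Ff_{\alpha}^c)$ by Proposition \ref{StableElementsAReps}, and the right side is computed by stable elements over the same category by Theorem \ref{StableElementsKATheory}.

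For each $\Ff_{\alpha}$-centric subgroup $Q$ of $S_{\alpha}$, the completion map constructed by Lahtinen in \cite{L} applied to the pullback twisting $\pi_Q$ on $BQ$ provides a natural map $\alpha_Q \colon R(Q;A) \to K(BQ;\pi_Q)$ which becomes an isomorphism after completing the source at the augmentation ideal $I(Q/A)$ acting on $R(Q;A)$ via the pullback $R(Q/A) \to R(Q;A)$. Lemma \ref{SameTopologies} identifies the $I(Q/A)$-adic topology on $R(Q;A)$ with its $I(\Ff)$-adic topology, so the same map realizes the $I(\Ff)$-adic completion as $K(BQ;\pi_Q)$.

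From here I proceed exactly as in Theorem \ref{completiontheorem}. The assignments $Q \mapsto R(Q;A)$ and $Q \mapsto K(BQ;\pi_Q)$ define contravariant functors on $\Or(\Ff_{\alpha}^c)$ valued in $R(\Ff)$-modules, and naturality of Lahtinen's construction with respect to group homomorphisms assembles the $\alpha_Q$ into a chain map of $R(\Ff)$-module cochain complexes
\[ \alpha_* \colon C^*(\Or(\Ff_{\alpha}^c); R(-;A)) \to C^*(\Or(\Ff_{\alpha}^c); K(B-;\pi_-)). \]
Since $R(\Ff)$ is Noetherian and each $R(Q;A)$ is a finitely generated $R(\Ff)$-module (by the argument of Corollary \ref{TwistedRepFG} applied to $Q$), $I(\Ff)$-adic completion is exact on the finitely many terms of the cochain complex in each degree and commutes with the finite direct products defining $C^*$. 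Thus $\alpha_*$ becomes an isomorphism after completion, and taking $H^0$ gives the desired isomorphism
\[ R(\Ff_{\alpha};A)^{\wedge}_{I(\Ff)} \cong \higherlim{\Or(\Ff_{\alpha}^c)}{} K(BQ;\pi_Q) \cong K(|\Ll_{\alpha}|\pcom;\pi). \]
The $K(|\Ll|\pcom)$-module structure is preserved because the isomorphism is induced by restriction to $BS$ (compatibly on both sides), just as in Theorem \ref{completiontheorem}.

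The main obstacle I expect is checking that Lahtinen's completion maps have the naturality needed to assemble into a morphism of cochain complexes over $\Or(\Ff_{\alpha}^c)$, whose morphisms are classes of $\Ff_{\alpha}$-homomorphisms modulo inner automorphisms, rather than just literal inclusions. Concretely, for a morphism $[\varphi] \colon Q \to Q'$ in $\Or(\Ff_{\alpha}^c)$, one must verify that $\alpha_Q \circ \varphi^* = (B\varphi)^* \circ \alpha_{Q'}$ and that both maps respect the $BA$-equivariance used to identify $^{\alpha}K$ with $K(-;\pi_-)$. Once this is in place, the rest of the argument is a formal repetition of Theorem \ref{completiontheorem}, using Lemma \ref{SameTopologies} to replace $I(Q/A)$ by $I(\Ff)$ throughout.
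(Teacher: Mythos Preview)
Your proposal is correct and follows essentially the same approach as the paper: reduce via the untwisting identifications (Lemma \ref{UntwistingRepresentations} and Corollary \ref{UntwistingKTheoryPLocal}), compute both sides by stable elements over $\Or(\Ff_{\alpha}^c)$ (Proposition \ref{StableElementsAReps} and Theorem \ref{StableElementsKATheory}), apply Lahtinen's completion theorem together with Lemma \ref{SameTopologies} on each $\Ff_{\alpha}$-centric subgroup, and then use exactness of $I(\Ff)$-adic completion on the cochain complex exactly as in Theorem \ref{completiontheorem}. The only point where the paper is slightly more explicit than your outline is the naturality concern you flag at the end: the paper resolves it by observing that the composite $R(P;A) \cong {^{\alpha}R(P/A)} \to {^{\alpha}K(B(P/A))} \cong K(BP;\pi_P)$ simply sends a representation of $P$ to the homotopy class of the induced map $BP \to \IZ \times BU$, from which naturality with respect to morphisms in $\Or(\Ff_{\alpha}^c)$ is immediate.
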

   
\begin{proof}
For any subgroup $Q$ of $S$, the constant map $EQ \to *$ induces a homomorphism
\[ {^{\alpha}R(Q)} \to {^{\beta_n(\alpha)}K_Q(EQ)} \]
and ${^{\beta_n(\alpha)}K_Q(EQ)} \cong {^{\beta_n(\alpha)} K(BQ)}$ by Proposition 3.2 in \cite{FHT}.
We obtain a homomorphism ${^{\alpha}R(Q)} \to {^{\beta_n(\alpha)}K(BQ)}$. Now let $P$ be an 
$\Ff_{\alpha}$-centric subgroup of $S_{\alpha}$. Consider the composition
\[ R(P;A) \cong {^{\alpha}R(P/A)} \to {^{\beta_n(\alpha)} K(B(P/A))} \cong K(BP;\pi_P) \]
where the first isomorphism is given by untwisting, the second map is the map mentioned
in the previous paragraph and the last isomorphism comes from Lemma \ref{UntwistingKTheory}.
The composition sends a representation of $P$ to the homotopy class of the induced map
$BP \to \IZ \times BU$, hence it determines a natural transformation $\beta$ of functors 
$\Or(\Ff_{\alpha}^c) \to \Ab$.

Let $R(\mbox{ };A)$ and $R(\mbox{ };A)^{\wedge}_{I_{\Ff}}$ be the contravariant functors 
$\Or(\Ff_{\alpha}^c) \to \Ab$ which send $P$ to $R(P;A)$ and $P$ to $R(P;A)^{\wedge}_{I_{\Ff}}$,
respectively. We now follow the same argument from Theorem \ref{completiontheorem}. Completion 
of $R(\Ff)$-modules with respect to the ideal $I(\Ff)$ is an additive functor, so we have 
an isomorphism of complexes
\[ C^*(\Or (\Ff_{\alpha}^c );R(\mbox{ };A) )^{\wedge}_{I_{\Ff}} \cong C^*(\Or(\Ff_{\alpha}^c);R(\mbox{ };A)^{\wedge}_{I_{\Ff}}) \]

The map $\beta_P$ induces an isomorphism $R(P;A) ^{\wedge}_{I(P/A)} \to K^*(BP;\pi_P)$
of abelian groups by Theorem 1 in \cite{L}. Since the $I(P/A)$-adic topology coincides with the $I(\Ff)$-adic topology by Lemma \ref{SameTopologies},
the chain map $\beta_*$ induces an isomorphism of chain complexes
\[ C^*(\Or(\Ff_{\alpha}^c);R(\mbox{ };A)^{\wedge}_{I_{\Ff}}) \cong C^*(\Or(\Ff_{\alpha}^c);K(\mbox{ };\pi)) \]
where $K(\mbox{ };\pi)$ denotes the contravariant functor $\Or(\Ff_{\alpha}^c) \to \Ab$ which sends $P$
to $K(BP;\pi_P)$. Then we have the following isomorphisms
\begin{align*}
^{\beta_n(\alpha)} K(|\Ll| \pcom) & \cong K(|\Ll_{\alpha}| \pcom;\pi) \\
                         & \cong \higherlim{\Or(\Ff_{\alpha}^c)}{} K(BP;\pi_P) \\
               & \cong H^0( C^*(\Or(\Ff_{\alpha}^c);K(\mbox{ };\pi ) )) \\
               & \cong H^0( C^*(\Or(\Ff_{\alpha}^c);R(\mbox{ };A) )^{\wedge}_{I_{\Ff}} )
\end{align*}
where the first isomorphism holds by Corollary \ref{UntwistingKTheoryPLocal} and the second
isomorphism by Theorem \ref{StableElementsKATheory}. Since $R(\Ff)$ is a Noetherian ring and 
the $R(\Ff)$-modules $R(P;A)$ are finitely generated by Lemma \ref{TwistedRepFG}, completion 
of finitely generated $R(\Ff)$-modules with respect to the ideal $I(\Ff)$ is an exact functor and so
\begin{align*} 
H^0( C^*(\Or(\Ff_{\alpha}^c);R(\mbox{ };A) )^{\wedge}_{I_{\Ff}} ) & \cong H^0( C^*(\Or(\Ff_{\alpha}^c);R(\mbox{ };A) ) )^{\wedge}_{I_{\Ff}} \\
               & \cong \left( \higherlim{\Or(\Ff_{\alpha}^c)}{} R(P;A) \right) ^{\wedge}_{I_{\Ff}}  \\
               & \cong R_A(\Ff_{\alpha}) ^{\wedge}_{I_{\Ff}} \\
               & \cong {^{\alpha} R(\Ff)} ^{\wedge}_{I_{\Ff}}
\end{align*}
where the third isomorphism follows from Proposition \ref{StableElementsAReps} and the last isomorphism
from Lemma \ref{UntwistingRepresentations}. We obtain an isomorphism of abelian groups
\[ ^{\beta_n(\alpha)} K(|\Ll| \pcom) \cong {^{\alpha} R(\Ff)} ^{\wedge}_{I_{\Ff}} \]
which is induced by the restriction of the isomorphism of $K(BS)$-modules $^{\alpha} R(S)^{\wedge}_{I_{\Ff}} \to {^{\beta_n(\alpha)} K(BS)}$. Hence it is an isomorphism of $K(|\Ll| \pcom)$-modules.
\end{proof}  

\section{Computations}
\label{section:final}

In this final section, we include several computations of twisted and untwisted $K$-theory of classifying
spaces of $p$-local finite groups.

\begin{example}
Let $p \neq 2$ and let $\Ff$ be the saturated fusion system of $\Sigma_p$ over the
$p$-Sylow $S$ generated by $\sigma = (1,2,\ldots,p)$. The irreducible representations 
of $S \cong \IZ/p$ are the tensor powers of the one-dimensional representation $\rho$ 
that sends $\sigma$ to $ e^{2\pi i/p}$. Its representation ring is given by
\[ R(S) = \IZ[\rho]/(\rho^p-1) \]
The representation ring of $\Ff$ is generated by $1$ and
$ x = \rho + \rho^2 + \ldots + \rho^{p-1}$ since all the nontrivial
powers of $\sigma$ are conjugate in $\Sigma_p$. It satisfies $ x^2 = (p-1)1 + (p-2)x$, 
hence by Theorem \ref{completiontheorem},
\[ K( ( B\Sigma_p) ^{\wedge} _p) \cong \Big{[} \IZ[x]/(x^2-(p-2)x-p+1) \Big{]}^{\wedge}_{(x-p+1)} \cong \IZ[[y]]/(y^2+py) \]
where $y$ is the element in $K$-theory coming from $x-p+1$. Similarly, its $p$-adic $K$-theory
ring is given by $ K( ( B\Sigma_p) ^{\wedge} _p;\IZ \pcom) \cong  \IZ \pcom [[y]]/(y^2+py) $.
\end{example}

\begin{example}
Let $\Ff$ be the fusion system of $A_4$ over its $2$-Sylow 
\[ S=\{ 1,(1,2)(3,4),(1,3)(2,4), (1,4)(2,3) \} \]
The connecting map $ H^2((BA_4) ^{\wedge}_2;\IZ/2) \to H^3((BA_4) ^{\wedge}_2;\IZ)$ is an isomorphism
since $H^2((BA_4) ^{\wedge}_2;\IZ)=0$. Hence we identify both groups. The nontrivial element $\alpha$ in $ H^2((BA_4) ^{\wedge}_2;\IZ/2) \cong H^2(BA_4;\IZ/2) \cong \IZ/2$ corresponds to the fusion system $\Ff_{\alpha}$
of the central extension $SL_2(\IF_3)$ over a $2$-Sylow $S_{\alpha}$, which is isomorphic to $Q_8$. The only 
irreducible representation $\rho$ of $Q_8$ where the center acts by complex multiplication is the $2$-dimensional representation $\rho$ coming from the action of $Q_8$ on the quaternions, which satisfies
\[ \chi_{\rho}(g) = \left\{ \begin{array}{ll}
                             2 & \text{ if $g=1$ } \\
                             -2 & \text{ if $g=-1$ } \\
                             0 & \text{ otherwise } \end{array} \right. \]
Since $-1$ is central in $SL_2(\IF_3)$, this representation is $\Ff_{\alpha}$-invariant. Therefore
\[ R(\Ff_{\alpha};\IZ/2) \cong \IZ \rho \]

We need to determine its structure of module over $R(\Ff)$. The group $S$ is normal in $A_4$ and $\Aut_{A_4}(S) \cong \IZ/3$
is generated by conjugation by $(1,2,3)$. The set of irreducible representations of $S$ is given by $\{1, 1 \otimes \gamma, 
\gamma \otimes 1, \gamma \otimes \gamma \}$, where $\gamma$ is the sign representation of $\IZ/2$. The action of 
$\Aut_{A_4}(S)$ on the three nontrivial irreducible representations is transitive and so the representations
$1$ and $x = 1 \otimes \gamma + \gamma \otimes 1 + \gamma \otimes \gamma$ form a basis of $R(\Ff)$. One can 
check that $x^2 = 2x+3$ and so
\[ R(\Ff) \cong \IZ[x]/(x^2-2x-3) \]
The composition of the representation $x$ with the quotient $S_{\alpha} \to S$ results in the representation of 
$S_{\alpha}$ with character
\[ \chi(g) = \left\{ \begin{array}{ll}
                             3 & \text{ if $g=1$, $-1$ } \\
                             -1 & \text{ otherwise } \end{array} \right. \]
and computing their characters, we conclude that $x \cdot \rho =3\rho$. In particular, 
the augmentation ideal $I(\Ff)$ acts trivially on $R(\Ff_{\alpha};\IZ/2)$ and therefore
\[ ^{\alpha} K( (BA_4) ^{\wedge}_2 ) \cong R(\Ff_{\alpha};\IZ/2) \cong \IZ \]
We can also compute $K( (BA_4) ^{\wedge}_2 ) \cong \IZ[[y]]/(y^2+4y)$ and since $y$ is the 
element in $K$-theory corresponding to the element from $x-3$ in $R(\Ff)$, we see that $y$ 
acts trivially on $^{\alpha} K( (BA_4) ^{\wedge}_2 )$.
\end{example}

\begin{example}
In \cite{RV}, Ruiz and Viruel classified up to equivalence the saturated fusion systems over 
\[ S = \langle a, b, c \mid a^7 = 1, b^7 = 1, c^7 = 1, [a,b]=c, [a,c]=1, [b,c] = 1 \rangle , \]
the extraspecial $7$-group of order $7^3$ and exponent $7$, finding three exotic $7$-local finite groups.
Following the notation in \cite{Y}, we use the names $RV_1$, $RV_2$ and $RV_3$ for these exotic $7$-local 
finite groups and their fusion systems, and $BRV_i$ for their classifying spaces.

Theorem 5.5.4 in \cite{Go} describes the irreducible representations of $S$. It has $49$ one-dimensional representations 
$x^iy^j$ which come from the quotient $ S \to S/Z(S) \cong \IZ/7 \times \IZ/7 $. Let $\omega$ denote a primitive seventh
root of unity. The six remaining irreducible representations $z_j$ for $j=1,\ldots,6$ are determined by:
\[ z_j(a) = \diag (\omega^{6j},\omega^{5j},\ldots,\omega^j,1) \]
and $z_j(b)$ is the linear transformation that sends $e_1$ to $e_7$ and $e_n$ to $e_{n-1}$ if $n \geq 2$. Here 
$\{ e_1, \ldots, e_n \}$ denotes the standard basis of $\IC^7$. Note that $z_j(c) = w^j I $. 

In $RV_1$, we have $\Out_{RV_1}(S) \cong \IZ/36 \rtimes \IZ/2$ and this group is
generated by the three automorphisms defined by
\[ \left\{ \begin{array}{lll} 
   a & \mapsto & a^3 \\
   b & \mapsto  & b \end{array} \right. \qquad \left\{ \begin{array}{lll} 
   a & \mapsto & a \\
   b & \mapsto  & b^3 \end{array} \right. \qquad \left\{ \begin{array}{lll} 
   a & \mapsto & b \\
   b & \mapsto  & a \end{array} \right. \]
Moreover, the elementary abelian subgroup generated by $a$ and $c$ has $GL_2(\IF_7)$ as group of automorphisms.
Using these automorphisms, it is easy to check that the representation ring of $RV_1$ is generated by the trivial
representation and the representation 
\[ U = 7 \left( \sum_{k=1}^6 z_k \right) + \sum_{(i,j) \neq (0,0)} x^i y^j = \reg_S - 1 \]
Therefore 
\[ R(RV_1) \cong \IZ[U]/(U^2-341U-342) \]
On the other hand, we have $\Out_{RV_2}(S) \cong \IZ/3 \times SD_{16}$, generated by the
automorphisms determined by  
\[ \left\{ \begin{array}{lll} 
   a & \mapsto & a^{-1} \\
   b & \mapsto  & b \end{array} \right. \qquad \left\{ \begin{array}{lll} 
   a & \mapsto & a^2 \\
   b & \mapsto  & b^2 \end{array} \right. \qquad \left\{ \begin{array}{lll} 
   a & \mapsto & b^{-1} \\
   b & \mapsto  & a \end{array} \right. \qquad \left\{ \begin{array}{lll} 
   a & \mapsto & a^{-1}b^{-1} \\
   b & \mapsto  & ab^{-1} \end{array} \right. \]
The elementary abelian subgroup generated by $c$ and $ab^2$ and the one generated by $c$ and $a$ both have $SL_2(\IF_7) \rtimes \IZ/2$ as group of automorphisms. 
Using all these automorphisms, it is again easy to check that the representation ring of $RV_2$
is generated by $1$ and $ U $. So its representation ring coincides with $R(RV_1)$. Since the character of $U$ is equal to $-1$ for all nontrivial elements of $S$ and $RV_2$ is a subcategory of $RV_3$, we have $R(RV_2)=R(RV_3)$. Therefore we conclude 
\[ K(BRV_i) \cong \IZ[[u]]/(u^2+343u) \]
for $i=1,2,3$, where $u$ is the element in $K$-theory coming from $U-342$. Similarly, $K(BRV_i;\IZ ^{\wedge}_7) \cong \IZ ^{\wedge}_7[[u]]/(u^2+343u)$.
\end{example}

\bibliography{VersionArxivBib2}
\bibliographystyle{amsplain}

\end{document}